\newtheorem{thm}{Theorem}[section]
\newtheorem{prop}[thm]{Proposition}
\newtheorem{lem}[thm]{Lemma}
\newtheorem{cor}[thm]{Corollary}
\numberwithin{equation}{section}
\theoremstyle{definition}
\newtheorem{definition}[thm]{Definition}
\newtheorem{remark}[thm]{Remark}
\newtheorem{ex}[thm]{Example}
\DeclareMathOperator{\im}{Im} 
\DeclareMathOperator{\cok}{coker} 
\newcommand{\iso}{\cong}
\newcommand{\farg}{-} 
\newcommand{\id}{\mathrm{id}}
\newcommand{\comp}{\circ} 
\newcommand{\mor}[1]{\xrightarrow{#1}}
\newcommand{\isomor}{\mor{\sim}} 
\newcommand{\rest}[1]{|_{#1}} 
\newcommand{\K}{\Bbbk} 
\newcommand{\ring}[1]{\mathbb{#1}}
\newcommand{\Z}{\ring{Z}} 
\newcommand{\dg}{\mathrm{dg}}
\newcommand{\D}[1][]{\mathrm{D}^{#1}} 
\newcommand{\Db}{\D[b]} 
\newcommand{\cat}[1]{{\mathbf{#1}}} 
\newcommand{\opp}{^{\circ}} 
\newcommand{\fun}[1]{\mathsf{#1}} 
\newcommand{\dgcat}[1]{\cat{#1}} 
\newcommand{\dgfun}[1]{\fun{#1}} 
\newcommand{\dgker}[1]{\mathcal{#1}} 
\newcommand{\FM}[2][]{\Phi^{#1}_{#2}} 
\newcommand{\dfun}[1]{\Phi_{#1}} 
\newcommand{\ext}[1]{\widehat{#1}} 
\newcommand{\res}[1]{\widetilde{#1}} 
\newcommand{\dgFM}[1]{\ext{\dfun{#1}}} 
\newcommand{\dgm}[1]{\mathrm{dgMod}(#1)} 
\newcommand{\dgh}[1]{\mathrm{dgMod_{hp}}(#1)} 
\newcommand{\Pe}[1]{\mathrm{Perf}(#1)} 
\newcommand{\hproj}[1]{\mathrm{h\text{-}proj}(#1)} 
\newcommand{\hpdg}[1]{{#1}^\mathrm{hp}}
\newcommand{\essim}[1]{\overline{#1}} 
\newcommand{\dgMor}[1]{\mathrm{Mor}(#1)} 
\newcommand{\dgFun}{\underline{Hom}} 
\newcommand{\dgCat}{\cat{dgCat}} 
\newcommand{\Hqe}{\cat{Hqe}} 
\newcommand{\hpdgCat}{\cat{hp\text{-}dgCat}} 
\newcommand{\hpHqe}{\cat{hp\text{-}Hqe}} 
\newcommand{\hqe}[1]{[#1]} 
\newcommand{\hq}[1]{[#1]} 
\newcommand{\Ind}[1]{\mathrm{Ind}_{#1}} 
\newcommand{\Res}[1]{\mathrm{Res}_{#1}} 
\newcommand{\src}[1]{\dgfun{S}_{#1}} 
\newcommand{\tar}[1]{\dgfun{T}_{#1}} 
\newcommand{\inc}[1]{\dgfun{I}_{#1}} 
\newcommand{\gpob}[1]{#1^I} 
\newcommand{\pob}[1]{P(#1)} 
\newcommand{\Cdg}[1][\K]{\mathrm{C}_{\dg}(#1)} 
\newcommand{\Yon}[1]{\fun{Y}_{#1}} 
\newcommand{\ho}[1]{[#1]_{\mathrm{iso}}} 
\newcommand{\dgid}[1]{\Delta_{#1}} 
\newcommand{\monomap}[1][]{\Sigma_{#1}} 
\newcommand{\isomap}[1][]{\Lambda_{#1}} 
\newcommand{\Hom}{\mathrm{Hom}}
\newcommand{\Iso}{\mathrm{Iso}}
\newcommand{\R}{\mathbb{R}} 
\newcommand{\Ob}{\mathrm{Ob}}
\newcommand{\ZZ}{\mathbb{Z}}
\newcommand{\tto}{\longrightarrow}
\newcommand{\rqr}[1]{\mathrm{h\text{-}proj}(#1)^{\mathrm{rqr}}} 
\newcommand{\IHom}{\R\dgFun}
\newcommand{\rd}{\mathbf{R}} 
\newcommand{\ld}{\mathbf{L}} 
\newcommand{\lotimes}{\otimes^{\ld}} 
\newcommand{\dgA}{\dgcat{A}} 
\newcommand{\dgB}{\dgcat{B}} 
\newcommand{\dgC}{\dgcat{C}} 
\newcommand{\dgD}{\dgcat{D}} 
\newcommand{\dgF}{\dgfun{F}} 
\newcommand{\dgG}{\dgfun{G}} 
\newcommand{\dgH}{\dgfun{H}} 
\newcommand{\dgI}{\dgfun{I}} 
\newcommand{\dgJ}{\dgfun{J}} 
\newcommand{\dgQ}{\dgfun{Q}} 
\newcommand{\ke}{\dgker{E}} 
\newcommand{\kkd}{D}
\newcommand{\kke}{E}
\newcommand{\kkf}{F}
\newcommand{\kkg}{G}
\newcommand{\kki}{I}
\begin{document}

	\title[Internal Homs via extensions of dg functors]{Internal Homs via extensions of dg functors}

	\author{Alberto Canonaco and Paolo Stellari}

	\address{A.C.: Dipartimento di Matematica ``F. Casorati'', Universit{\`a}
	degli Studi di Pavia, Via Ferrata 1, 27100 Pavia, Italy}
	\email{alberto.canonaco@unipv.it}

	\address{P.S.: Dipartimento di Matematica ``F.
	Enriques'', Universit{\`a} degli Studi di Milano, Via Cesare Saldini
	50, 20133 Milano, Italy}
	\email{paolo.stellari@unimi.it}
    \urladdr{\url{https://sites.unimi.it/stellari}}
	
	\thanks{A.~C.~ was partially supported by the national research project
	  ``Moduli, strutture geometriche e loro applicazioni'' (PRIN 2009).
	P.~S.~ is partially supported by the grants FIRB 2012 ``Moduli Spaces and Their Applications'' and
	the national research project ``Geometria delle Variet\`a Proiettive'' (PRIN 2010-11).}

	\keywords{Dg categories, dg functors}

	\subjclass[2010]{14F05, 18E10, 18E30}

\begin{abstract}
We provide a simple proof of the existence of internal Homs in the localization of the category of dg categories with respect to all quasi-equivalences and of some of their main properties such as the so-called derived Morita theory. This was originally proved in a seminal paper by To\"{e}n.
\end{abstract}

\maketitle

\section{Introduction}\label{sec:intro}

The problem of characterizing exact functors between
triangulated categories is certainly one of the major open questions
in the theory of triangulated categories. As
soon as we deal with triangulated categories which are the bounded
derived categories of coherent sheaves on smooth projective varieties,
this challenge in the vague form above gets neater. More precisely, if $X_1$ and $X_2$ are smooth
projective schemes and we denote by $\Db(X_i)$ the bounded derived category of
coherent sheaves on $X_i$, then one would expect that all exact functors
$\fun{F}:\Db(X_1)\tto\Db(X_2)$
are of \emph{Fourier--Mukai type} (see \cite{BLL, Or1}). This means that there should exist
$\ke\in\Db(X_1\times X_2)$ and an isomorphism of
exact functors $\fun{F}\iso\FM{\ke}$, where, denoting by $p_i:X_1\times
X_2\to X_i$ the natural projections, $\FM{\ke}:\Db(X_1)\to\Db(X_2)$ is
the exact functor defined by
$\FM{\ke}:=\rd(p_2)_*(\ke\lotimes p_1^*(-))$.
Unfortunately, only partial results confirm the expectation above
(see \cite{CS1} for a survey about this).

If one changes perspective slightly and moves to higher categorical structures, the situation becomes amazingly
beautiful. More precisely, one looks at the localization $\Hqe$ of the category $\dgCat$ of (small) dg
categories over a commutative ring $\K$ with respect to all
quasi-equivalences. Then one can take dg enhancements $\dgD_1$ and $\dgD_2$ of $\Db(X_1)$ and $\Db(X_2)$ respectively. It turns out (see \cite{BLL}) that all Fourier--Mukai functors at the triangulated level lift to morphisms between $\dgD_1$ and $\dgD_2$ in $\Hqe$. More surprisingly, all morphisms in $\Hqe$ between these dg categories are of this type.
This was observed in the seminal paper \cite{To} and comes as a corollary of
a very general and elegant result about the existence of internal Homs in $\Hqe$. The statement can be formulated as follows.

\begin{thm}\label{thm:Toen}{\bf (To\"{e}n, \cite{To})}
Let $\dgA$, $\dgB$ and $\dgC$ be three dg categories over a commutative ring $\K$. Then there exists a natural bijection
\begin{equation}\label{eqn:firstpart}
\xymatrix{
\hqe{\dgA,\dgC}\ar@{<->}[rr]^-{1:1}&&\Iso(H^0(\rqr{\dgA\opp\lotimes\dgC})).
}
\end{equation}
Moreover, the dg category $\IHom(\dgB,\dgC):=\rqr{\dgB\opp\lotimes\dgC}$ yields a natural bijection
\begin{equation}\label{eqn:ultima}
\xymatrix{
\hqe{\dgA\lotimes\dgB,\dgC}\ar@{<->}[rr]^-{1:1}&&\hqe{\dgA,\IHom(\dgB,\dgC)}
}
\end{equation}
proving that the symmetric monoidal category $\Hqe$ is closed.
\end{thm}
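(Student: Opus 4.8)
The plan is to establish the bijection \eqref{eqn:firstpart} by an explicit construction and then to deduce \eqref{eqn:ultima}, hence the closedness of $\Hqe$, by ``unfolding'' it at the level of bimodules. A preliminary reduction lets us assume $\dgA$ is h-projective: every dg category receives a quasi-equivalence $\dgA'\to\dgA$ with $\dgA'$ h-projective, and since quasi-equivalences are invertible in $\Hqe$ we get $\hqe{\dgA,\dgC}=\hqe{\dgA',\dgC}$; h-flatness of $\dgA'$ lets us compute $\dgA\opp\lotimes\dgC$ as $\dgA'\opp\otimes\dgC$, and the equivalence of derived module categories induced by $\dgA'\otimes\dgC\opp\to\dgA\otimes\dgC\opp$ identifies $H^0(\rqr{\dgA\opp\lotimes\dgC})$ with $H^0(\rqr{\dgA'\opp\otimes\dgC})$ compatibly. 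So from now on I take $\dgA$ h-projective and suppress all $\lotimes$'s.

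\emph{Proof of \eqref{eqn:firstpart}.} A morphism $\dgA\to\dgC$ in $\Hqe$ is represented by a roof $\dgA\xleftarrow{\,Q\,}\dgA''\xrightarrow{\,P\,}\dgC$ with $Q$ a quasi-equivalence (equivalently, since $\dgA$ is h-projective, by a dg functor $\dgA\to\dgC$ up to homotopy); send it to the class of the image, under the equivalence $\D(\dgA\opp\otimes\dgC)\simeq\D(\dgA''\opp\otimes\dgC)$ induced by $Q$, of the ``graph'' bimodule $(a'',c)\mapsto\dgC(c,P(a''))$, which is right quasi-representable. A direct check shows the class depends only on the $\Hqe$-morphism, and that the assignment is natural in $\dgA$ and $\dgC$. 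For the inverse, given a right quasi-representable $M$, form the full dg subcategory $\dgC_M\subseteq\hproj{\dgC}$ spanned by the representable modules together with the modules $M(a,-)$, $a\in\dgA$ (h-projective over $\dgC$ since $\dgA$ is h-projective): the Yoneda functor $\dgC\to\dgC_M$ is a quasi-equivalence — fully faithful, and essentially surjective because each $M(a,-)$ is isomorphic in $\D(\dgC)$ to a representable — while the bimodule structure maps $\dgA(a,a')\to\Hom_{\hproj{\dgC}}(M(a,-),M(a',-))$ make $a\mapsto M(a,-)$ a genuine dg functor $\dgA\to\dgC_M$; the roof $\dgA\to\dgC_M\xleftarrow{\sim}\dgC$ is the desired $\Hqe$-morphism. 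The two assignments are then mutually inverse by a direct verification: the graph bimodule of this last roof is canonically quasi-isomorphic to $M$. It is precisely this passage from $(\dgA,M)$ to the enlargement $\dgC_M$ of $\dgC$ and the strict dg functor landing in it — the ``extension of dg functors'' of the title — that replaces the model-categorical input in To\"en's original argument.

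\emph{Deduction of \eqref{eqn:ultima}.} Apply \eqref{eqn:firstpart} twice: $\hqe{\dgA\lotimes\dgB,\dgC}\cong\Iso H^0(\rqr{(\dgA\lotimes\dgB)\opp\lotimes\dgC})$ and, since $\IHom(\dgB,\dgC)=\rqr{\dgB\opp\lotimes\dgC}$, $\hqe{\dgA,\IHom(\dgB,\dgC)}\cong\Iso H^0(\rqr{\dgA\opp\lotimes\rqr{\dgB\opp\lotimes\dgC}})$. So it suffices to match these two sets of isomorphism classes naturally in $\dgA$. By associativity of the derived tensor product, $(\dgA\lotimes\dgB)\opp\lotimes\dgC\simeq\dgA\opp\lotimes(\dgB\opp\lotimes\dgC)$, so an $(\dgA\otimes\dgB)$-$\dgC$-bimodule $M$ is the same datum as a coherent $\dgA$-indexed family of $\dgB$-$\dgC$-bimodules $M_a:=M(a,-,-)$, with transition maps $\dgA(a,a')\to\Hom(M_a,M_{a'})$ coming from the bimodule structure; right quasi-representability of $M$ says exactly that each $M_a$ lies in $\rqr{\dgB\opp\lotimes\dgC}$, i.e.\ that $a\mapsto M_a$ is (the graph of) a dg functor into $\rqr{\dgB\opp\lotimes\dgC}$, which is automatically a right quasi-representable $\dgA$-$\rqr{\dgB\opp\lotimes\dgC}$-bimodule; conversely every such bimodule arises this way up to isomorphism. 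Checking that this correspondence respects quasi-isomorphisms and the h-projectivity conditions on both sides yields the bijection, and tracing the construction gives its naturality. Granting the symmetric monoidal structure $(\Hqe,\lotimes,\K)$, the natural bijection \eqref{eqn:ultima} exhibits $\IHom(\dgB,-)$ as right adjoint to $-\lotimes\dgB$, so $\Hqe$ is closed.

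\emph{Main obstacle.} The heart of the argument is the surjectivity in \eqref{eqn:firstpart}: realising an abstract right quasi-representable bimodule by an honest roof, which forces one to analyse the enlargement $\dgC_M$, verify that $\dgC\hookrightarrow\dgC_M$ is a quasi-equivalence and that $a\mapsto M(a,-)$ is strictly functorial, and — for naturality — make these choices sufficiently canonical. The unfolding in \eqref{eqn:ultima} is conceptually transparent but requires care in reconciling the h-projectivity and right-quasi-representability conditions appearing on its two sides.
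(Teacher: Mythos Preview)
Your outline for \eqref{eqn:firstpart} is a viable alternative route, but you misidentify the ``extension of dg functors'' of the title: in the paper this means the operation sending a dg functor $\dgF\colon\dgA\to\hproj{\dgB}$ to its extension $\ext{\dgF}\colon\hproj{\dgA}\to\hproj{\dgB}$ obtained by tensoring with the associated bimodule, not your enlargement $\dgC\hookrightarrow\dgC_M$ of the target. The paper's architecture is accordingly different from yours. It first proves the \emph{unrestricted} bijection $\Iso(H^0(\hproj{\dgA\opp\otimes\dgD}))\cong\hqe{\dgA,\hproj{\dgD}}$ (Proposition~\ref{prop:exuniq}), whose injectivity is the delicate point and is handled via the extension machinery by passing to $\hproj{\dgA\opp\otimes\dgA}$ and evaluating at an h-projective resolution of the diagonal. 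It then combines this with the injectivity of $\hqe{\dgA,\dgB'}\hookrightarrow\hqe{\dgA,\dgC'}$ for a fully faithful $\dgB'\hookrightarrow\dgC'$ (Proposition~\ref{prop:Morita2}) to cut down to $\rqr{}$. Your ``direct check'' that the bimodule class depends only on the $\Hqe$-morphism, together with the verification that \emph{both} composites are the identity (you argue only one), would have to reproduce this work. Incidentally, your parenthetical that h-projectivity of $\dgA$ lets one represent an $\Hqe$-morphism by a genuine dg functor is false: h-projective is weaker than cofibrant, and the paper deliberately avoids needing cofibrant replacements.

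There is a more substantive gap in your deduction of \eqref{eqn:ultima}. Applying \eqref{eqn:firstpart} with target $\IHom(\dgB,\dgC)=\rqr{\dgB\opp\otimes\dgC}$ lands you in $\Iso(H^0(\rqr{\dgA\opp\otimes\rqr{\dgB\opp\otimes\dgC}}))$, whose objects are modules over $\dgA\opp\otimes\rqr{\dgB\opp\otimes\dgC}$; the other side involves modules over $\dgA\opp\otimes\dgB\opp\otimes\dgC$. These are different base dg categories, and your unfolding only supplies one direction cleanly. The map $M\mapsto(a\mapsto M_a)$ does give a strict dg functor $\dgA\to\rqr{\dgB\opp\otimes\dgC}$, but for the converse an object of $\rqr{\dgA\opp\otimes\rqr{\dgB\opp\otimes\dgC}}$ corresponds via \eqref{eqn:firstpart} only to a \emph{roof} $\dgA\leftarrow\dgA''\to\rqr{\dgB\opp\otimes\dgC}$, from which one cannot read off an honest $(\dgA\otimes\dgB)\opp\otimes\dgC$-module without further argument; ``conversely every such bimodule arises this way up to isomorphism'' is precisely the point at issue. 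The paper sidesteps this entirely: it never applies \eqref{eqn:firstpart} with target $\rqr{\dgB\opp\otimes\dgC}$, but instead uses Proposition~\ref{prop:exuniq} with $\dgD=\dgB\opp\otimes\dgC$ (so both sides live over $\dgA\opp\otimes\dgB\opp\otimes\dgC$) together with Proposition~\ref{prop:Morita2} for the inclusion $\rqr{\dgB\opp\otimes\dgC}\hookrightarrow\hproj{\dgB\opp\otimes\dgC}$. The two rqr conditions then match tautologically, yielding \eqref{eqn:intHoms2} and \eqref{eqn:intHoms4} with the same right-hand side.
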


Here $\hqe{\farg,\farg}$ denotes the set of morphisms in $\Hqe$,
while $\rqr{\farg}$ denotes right quasi-representable h-projective dg
modules, which will be precisely described in Section
\ref{subsect:ext}. In the original version \cite{To} $\rqr{\farg}$ is
replaced by the dg category of right quasi-representable cofibrant dg
modules, which is actually quasi-equivalent to $\rqr{\farg}$.
Recall that the monoidal
structure provided by the derived tensor product $\farg\lotimes\farg$ is said to be \emph{closed}
if, for every $\dgB,\dgC\in\Hqe$, there exists
$\IHom(\dgB,\dgC)\in\Hqe$, such that the functor
$\dgA\mapsto\hqe{\dgA\lotimes\dgB,\dgC}$ is isomorphic to
$\dgA\mapsto\hqe{\dgA,\IHom(\dgB,\dgC)}$.

Roughly speaking, Theorem \ref{thm:Toen} asserts that all dg (quasi-)functors are of Fourier--Mukai type. The reason is that, if one looks carefully at the proof of the bijection \eqref{eqn:firstpart}, one sees that all morphisms in $\Hqe$ are essentially provided by the tensorization by dg bimodules, mimicking the definition of Fourier--Mukai functors given above in the triangulated setting.
It is worth pointing out that the first part of Theorem \ref{thm:Toen} comes in \cite{To} as a corollary of a much more general result involving substantially the simplicial structure on $\dgCat$, seen as a model category (see \cite[Thm.\ 4.2]{To}).

The purpose of this paper is to provide a simple proof of Theorem \ref{thm:Toen}
essentially based on the circle of ideas emerging from \cite{Dr} and \cite{Ke1}.

\subsection*{Comparing To\"{e}n's approach and ours} Following \cite{To}, one uses the model category structure on $\dgCat$ (see \cite{TaDg}) in such a way that any morphism in $\Hqe$ can be represented by a
`canonical' roof by means of the cofibrant replacements. At this point, the description
of the morphisms between two dg categories in $\Hqe$ can be essentially carried out
assuming that we are working with actual dg functors. In this way, To\"{e}n proves that, for two dg categories $\dgA$ and $\dgB$,
there is a bijection between $\hqe{\dgA,\dgB}$ and the isomorphism classes
of the homotopy category of right quasi-representable (fibrant and) cofibrant
$\dgA\opp\otimes\dgB$-dg modules (see Corollary 4.10 of \cite{To}). This is essentially the first part of Theorem \ref{thm:Toen}.

The existence of internal Homs follows from a characterization of the model category of dg functors between dg categories and a comparison between
this and the presentation above
(\cite[Thm.\ 6.1]{To}). As an application of the existence of internal
Homs, To\"{e}n deduces in \cite[Thm.\ 7.2]{To} a restriction theorem asserting in particular that, given two dg
categories $\dgA$ and $\dgB$, the Yoneda embedding of $\dgA$ into the dg
category $\mathrm{Int}(\dgA)$ of cofibrant $\dgA$-dg modules yields
a quasi-equivalence between the continuous internal Hom
$\IHom_c(\mathrm{Int}(\dgA),\mathrm{Int}(\dgB))$ and
$\IHom(\dgA,\mathrm{Int}(\dgB))$ (see Sections \ref{subsec:tensors} and \ref{sec:Morita} for the precise definitions), which goes under the name of \emph{derived Morita theory}.
This is a very interesting result in itself with various geometric
applications as explained in \cite[Sect.\ 8]{To}. One should keep in
mind that, for a dg category $\dgA$, the dg category
$\mathrm{Int}(\dgA)$ is contained in the dg category $\hproj{\dgA}$ of
h-projective $\dgA$-dg modules and the inclusion is a
quasi-equivalence (see, for example, \cite[Lemma 2.6]{LS} and the discussion in Section 3.2 of \cite{Ke} for this standard fact).

In a sense, our argument starts from this sort of ending point in
\cite{To}. Indeed, we prove directly a weaker form of this restriction result (see
Proposition \ref{prop:Morita} and Corollary \ref{cor:bij}) which provides a nice description of some morphisms in $\Hqe$ in terms of isomorphism classes of special dg bimodules (see Proposition \ref{prop:exuniq}).
The existence of internal
Homs and the proof of Theorem \ref{thm:Toen} follow then from a simple and purely
formal argument explained in Section \ref{sec:Morita}. All of this is achieved using the notion of extension of dg functors which is already contained in \cite{Ke1}. This is carried out in Sections \ref{subsect:ext} and further developed to deal with morphisms in $\Hqe$ in Section \ref{subsec:morita}. As we will see, this is a conceptually very simple application of the notion of tensor product of dg modules (see \cite{Dr} and \cite{Ke1}). In particular, it should be noted that the core and the really non-trivial part of this paper is the content of Section \ref{sec:extensions}.

This slight change of perspective makes our proof easier also because we can
forget about the model category structure and content ourselves with
the fact that the category of dg categories is a category of fibrant
objects. This has certainly been known for a long time and is
summarized in Section \ref{subsec:morHqe} (after a short introduction
to dg categories in Section \ref{subsec:tensors}).

Once Theorem \ref{thm:Toen} is settled, some important properties of internal Homs which are proved in \cite{To} can be deduced in a straightforward way. This is the case of Corollaries \ref{cor:adj} and \ref{cor:exintHoms2}. The last one covers the dg Morita theory mentioned above.

\smallskip

It is probably worth pointing out here that To\"{e}n's result gave an input
to further generalizations at the level of $\infty$-categories
(see, for example, \cite{BFN}). But for this one really needs the model and simplicial structures on $\dgCat$. This is out of the scope of our paper.

The reader should be also aware that a different approach to the existence of internal Homs was proposed by Tabuada in \cite{Ta}. In particular, he constructs a new model category structure for the homotopy category of dg categories, where the localization takes place with respect to the so-called \emph{Morita equivalences} and not just the quasi-equivalences. Clearly, this means that \cite{Ta} is not in the same generality as \cite{To}. Moreover, \cite{Ta} goes in a transversal direction with respect to the present work. Nevertheless, one can observe that in Tabuada's approach the internal Homs can be naturally interpreted as derived functors.

We conclude this summary going back to the triangulated setting presented at the beginning. It is important to observe that there is no hope that the beauty of Theorem \ref{thm:Toen} can appear in the triangulated context as well. Indeed, it has been shown in \cite{CS2} that the object $\ke\in\Db(X_1\times X_2)$ realizing a Fourier--Mukai functor is by no means unique (up to isomorphism).

\subsection*{Notation and general assumptions}

We denote by $\K$ a commutative ring. By a $\K$-linear category we mean a category
whose Hom spaces are $\K$-modules and such that the compositions
are $\K$-bilinear, not assuming that finite direct sums exist. For a category $\dgA$, we denote by $\Iso(\dgA)$ the set of isomorphism classes of objects in $\dgA$.

Throughout the paper, we assume that a universe $\mathbb{U}$ containing an infinite set is fixed. Anticipating some definitions that will be explained in Section \ref{sec:generalities}, let us spend some words to clarify the context. We will consider \emph{$\mathbb{U}$-small} dg categories, meaning dg categories $\dgD$ such that $\Hom_{\dgD}(D_1,D_2)$ is a complex of $\K$-modules which are isomorphic to objects of $\mathbb{U}$ and such that the collection of objects of $\dgD$ is isomorphic to an object of $\mathbb{U}$ as well. Analogously one can speak about $\mathbb{U}$-small sets. We will then define the dg categories of $\dgD$-dg modules and of h-projective $\dgD$-dg modules. Again, with this we tacitly mean the dg categories of $\mathbb{U}$-small $\dgD$-dg modules and h-projective $\dgD$-dg modules. This
simply refers to the dg modules that take values in the dg category of complexes of $\mathbb{U}$-small $\K$-modules. These are no longer $\mathbb{U}$-small dg categories. Nevertheless, due to the results in \cite[Appendix A]{LO}, they are dg equivalent to $\mathbb{V}$-small dg categories, for some universe
$\mathbb{U}\in\mathbb{V}$. By its definition and main properties (see Section \ref{sec:Morita}), the dg category $\rqr{\farg}$ mentioned above is
\emph{essentially $\mathbb{U}$-small}. This means that the isomorphism classes of its objects form a $\mathbb{U}$-small set. After these warnings and to simplify
the notation, we will not mention explicitly the universe where we are working any longer in the paper, as it should be clear from the context. We refer to \cite[Appendix A]{LO} for all possible subtle logical issues.

\section{Basic properties of dg categories}\label{sec:generalities}

This section collects some very well-known facts concerning dg categories. The emphasis is on the properties of morphisms in the localization of the category of dg categories by quasi-equivalences.

\subsection{Dg categories and tensor product of dg modules}\label{subsec:tensors}

A \emph{dg category} is a $\K$-linear category $\dgA$
such that, for all $A,B\in\Ob(\dgA)$, the morphism spaces
$\dgA(A,B)$ are $\ZZ$-graded $\K$-modules with a differential
$d\colon\dgA(A,B)\to\dgA(A,B)$ of degree $1$ and the composition maps
are morphisms of complexes. A \emph{dg functor} $\dgF\colon\dgA\to\dgB$ between two dg
categories is the datum of a map $\Ob(\dgA)\to\Ob(\dgB)$ and of
morphisms of complexes of $\K$-modules
$\dgA(A,B)\to\dgB(\dgF(A),\dgF(B))$, for all
$A,B\in\Ob(\dgA)$, which are compatible with the compositions and
the units (i.e.\ the identity maps which are, automatically, closed morphisms in degree $0$).
The category with objects dg categories and morphisms dg functors will be denoted by $\dgCat$. Recalled that a dg functor $\dgF\colon\dgA\to\dgB$ is \emph{full} if the
morphisms of complexes of $\K$-modules
$\dgA(A,B)\to\dgB(\dgF(A),\dgF(B))$ are surjective, for all
$A,B\in\Ob(\dgA)$. If such maps are injective, then $\dgF$ is \emph{faithful}.

\begin{ex}\label{ex:catfun}
(i) Every $\K$-linear category can be regarded as a dg category in
which the morphism spaces are concentrated in degree $0$.

(ii) Every dg algebra $A$ over $\K$ defines a dg category with
one object and $A$ as its space of endomorphisms. Notice that an
ordinary $\K$-algebra (in particular, $\K$ itself) can be regarded as
a dg algebra in degree $0$, hence as a dg category with one object.
	
(iii) We denote by $\Cdg$ the dg category whose objects are complexes of $\K$-modules. We refer to \cite[Sect.\ 2.2]{Ke} for the precise definition.

(iv) Given two dg categories $\dgA$ and $\dgB$, one can construct the
dg categories $\dgFun(\dgA,\dgB)$ and $\dgA\otimes\dgB$ (see
\cite[Sect.\ 2.3]{Ke} for the precise definitions). The objects of
$\dgFun(\dgA,\dgB)$ are dg functors from $\dgA$ to $\dgB$ and
morphisms are given by (dg) natural transformations. On the other
hand, the objects of $\dgA\otimes\dgB$ are pairs $(A,B)$ with $A\in\dgA$ and $B\in\dgB$, while the morphisms are defined by
\[
\dgA\otimes\dgB((A_1,B_1),(A_2,B_2))=\dgA(A_1,A_2)\otimes_{\K}\dgB(B_1,B_2),
\]
for all $(A_i,B_i)\in\dgA\otimes\dgB$ and $i=1,2$. Notice that
the tensor product defines a symmetric monoidal structure on $\dgCat$. Namely, up to isomorphism, the tensor product is associative, commutative and $\K$ acts as the identity. It is also easy to see that two dg functors
$\dgF\colon\dgA\to\dgB$ and $\dgG\colon\dgC\to\dgD$ naturally induce a
dg functor $\dgF\otimes\dgG\colon\dgA\otimes\dgC\to\dgB\otimes\dgD$.

(v) If $\dgA$ is a dg category, $\dgA\opp$ denotes the opposite dg
category. The objects of $\dgA\opp$ are the same as those of $\dgA$,
while $\dgA\opp(A,B):=\dgA(B,A)$ and the compositions in $\dgA\opp$
are defined as in $\dgA$, up to a sign (see \cite[Sect.\ 2.2]{Ke} for details).

\end{ex}

If $\dgA$ is a dg category, we denote by $Z^0(\dgA)$ (respectively
$H^0(\dgA)$) the ($\K$-linear) category with the same objects as
$\dgA$ and whose morphisms from $A$ to $B$ are given by $Z^0(\dgA(A,B))$
(respectively $H^0(\dgA(A,B))$). The category $H^0(\dgA)$ is called the
\emph{homotopy category} of $\dgA$, and it has a natural structure of
triangulated category if $\dgA$ is pretriangulated (see \cite[Sect.\ 4.5]{Ke} for
the precise definition). A morphism of $Z^0(\dgA)$ is a {\em dg
isomorphism} (respectively a {\em homotopy equivalence}) if it is an
isomorphism (respectively if its image in $H^0(\dgA)$ is an
isomorphism). Accordingly, two objects $A$ and $B$ of $\dgA$ are {\em
dg isomorphic} (respectively {\em homotopy equivalent}) if $A\iso B$
in $Z^0(\dgA)$ (respectively in $H^0(\dgA)$). If $\dgB$ is another dg
category, two dg functors from $\dgA$ to $\dgB$ will be said to be dg
isomorphic (respectively homotopy equivalent) if they are dg
isomorphic (respectively homotopy equivalent) in $\dgFun(\dgA,\dgB)$.

For all dg categories $\dgA$, $\dgB$ and $\dgC$, there is a natural
isomorphism in $\dgCat$
\begin{equation}\label{tensorHom}
\dgFun(\dgA\otimes\dgB,\dgC)\iso\dgFun(\dgA,\dgFun(\dgB,\dgC)).
\end{equation}
In particular, there is a natural bijection between the objects of the
two dg categories above, which shows that the functor
$\farg\otimes\dgB\colon\dgCat\to\dgCat$ is left adjoint to
$\dgFun(\dgB,\farg)\colon\dgCat\to\dgCat$. Hence the symmetric monoidal structure on $\dgCat$ discussed in Example \ref{ex:catfun} (iv) is closed.

For a dg category $\dgA$, we set $\dgm{\dgA}:=\dgFun(\dgA\opp,\Cdg)$. The objects in $\dgm{\dgA}$ are called \emph{$\dgA$-dg modules}.
Denote by $\hproj{\dgA}$ the full dg subcategory of
$\dgm{\dgA}$ with objects the \emph{h-projective $\dgA$-dg
  modules}. Recall that $M\in\dgm{\dgA}$ is h-projective if
$H^0(\dgm{\dgA})(M,N)=0$, for all $N\in\dgm{\dgA}$ which are acyclic
(meaning that $N(A)$ is an acyclic complex, for all $A\in\dgA$).

\begin{remark}
For every dg category $\dgA$, both $\dgm{\dgA}$ and $\hproj{\dgA}$ are
pretriangulated dg categories, and their homotopy categories are
closed under arbitrary direct sums (see \cite[Sect.\ 2.2]{Ke1}).
\end{remark}

At the same time we denote by $\Pe{\dgA}$ the full dg subcategory of
$\hproj{\dgA}$ consisting of \emph{perfect $\dgA$-dg
  modules}, i.e.\ the compact objects in the triangulated category $H^0(\hproj{\dgA})$.
Recall that an object $C$ in a triangulated category $\cat{D}$ is
\emph{compact} if, given $\{D_i\}_{i\in I}\subset\cat{D}$ such that
$I$ is a set and $\bigoplus_i D_i$ exists in $\cat{D}$, the canonical
map $\bigoplus_i\cat{D}(C,D_i)\tto\cat{D}(C,\oplus_iD_i)$ is an
isomorphism. Moreover, a set of compact objects $\{C_j\}_{j\in
J}\subset\cat{D}$ is a set of \emph{compact generators} for $\cat{D}$
if, given $D\in\cat{D}$ with $\cat{D}(C_j,D[i])=0$ for all $j\in J$
and all $i\in\ZZ$, then $D\iso 0$.

The Yoneda embedding of $\dgA$ is the fully faithful (and injective on
objects) dg functor $\Yon{\dgA}\colon\dgA\to\dgm{\dgA}$ defined on
objects by $\Yon{\dgA}(A):=\dgA(\farg,A)$. The image of $\Yon{\dgA}$
is always contained in $\hproj{\dgA}$. We denote by
$\essim{\dgA}\subseteq\hproj{\dgA}$ the full dg subcategory of
$\dgm{\dgA}$ with objects the dg modules which are homotopy equivalent
to objects in the image of $\Yon{\dgA}$. Notice that $\Yon{\dgA}$ factors through the dg category $\Pe{\dgA}$ which, in turn, contains $\essim{\dgA}$ (see, for example, \cite[Sect.\ 3.5]{Ke}).

Recall that a natural transformation $\theta$ between
two $\dgA$-dg modules $M$ and $N$ is a \emph{quasi-isomorphism} if it
is closed of degree $0$ and $\theta(A)\colon M(A)\to N(A)$ is a
quasi-isomorphism, for every $A\in\dgA$. It can be proved that for
every $M\in\dgm{\dgA}$ there exists an \emph{h-projective resolution}
of $M$, namely a quasi-isomorphism $N\to M$ with $N\in\hproj{\dgA}$
(see \cite[Sect.\ 3.1]{Ke1} and \cite[Sect.\ 14.8]{Dr}). Moreover, a quasi-isomorphism between
two h-projective dg modules is a homotopy equivalence (see, for example,
\cite[Thm.\ 3.4]{KL}).

A dg functor $\dgF\colon\dgA\to\dgB$ induces a functor
$H^0(\dgF)\colon H^0(\dgA)\to H^0(\dgB)$, which is exact
(between triangulated categories) if $\dgA$ and $\dgB$ are
pretriangulated. A dg functor $\dgF\colon\dgA\to\dgB$ is a
\emph{quasi-equivalence}, if the maps
$\dgA(A,B)\to\dgB(\dgF(A),\dgF(B))$ are quasi-isomorphisms, for
every $A,B\in\dgA$, and $H^0(\dgF)$ is an equivalence of categories. One can
consider the localization $\Hqe$ of $\dgCat$ with respect to
quasi-equivalences (which is denoted by $Ho(\dgCat)$ in \cite{To}).

For a dg functor $\dgF$, we write $\hq{\dgF}$ for its image in $\Hqe$ and,
given two dg categories $\dgA$ and $\dgB$, we denote by
$\hqe{\dgA,\dgB}$ the morphisms in $\Hqe$ between $\dgA$ and $\dgB$.
Notice that any $f\in\hqe{\dgA,\dgB}$ induces a ($\K$-linear) functor
$H^0(f)\colon H^0(\dgA)\to H^0(\dgB)$, well defined up to
isomorphism. We say that $H^0(f)$ is \emph{continuous} if it commutes
with arbitrary direct sums in $H^0(\dgA)$. For simplicity, we
sometimes say that $f$ is continuous if $H^0(f)$ is. We denote by
$\hqe{\dgA,\dgB}_c$ the set of continuous morphisms in $\hqe{\dgA,\dgB}$.

\begin{definition}\label{def:termwise}
Let $\dgG_1,\dgG_2\colon\dgA\to\dgB$ be two dg functors.
A natural transformation $\theta\colon\dgG_1\to\dgG_2$ is a \emph{termwise homotopy equivalence} if it is closed of degree $0$ and $\theta(A)\colon\dgG_1(A)\to\dgG_2(A)$ is a homotopy equivalence, for all $A\in\dgA$.	
\end{definition}

\begin{remark}\label{rmk:extnat}
First of all, one observes that the natural transformation in Definition \ref{def:termwise} induces a natural transformation $\theta'$ between $H^0(\dgG_1)$ and $H^0(\dgG_2)$ and $\theta$ is a termwise homotopy equivalence if and only if $\theta'$ is an isomorphism. If $\dgA$ and $\dgB$ are pretriangulated dg categories, the functors $H^0(\dgG_1)$ and $H^0(\dgG_2)$ are exact and the check that $\theta$ is a termwise homotopy equivalence is just a question involving standard techniques in the theory of triangulated categories. 

More precisely, there is indeed a general principle that will be
applied later on. Namely, assume that $\alpha$ is a natural
transformation between two exact continuous functors
$\fun{F}_1,\fun{F}_2\colon\cat{D}\to\cat{D}'$, where $\cat{D}$ and
$\cat{D}'$ are triangulated categories with arbitrary direct sums. Let $\cat{D}_1$ be a full
triangulated subcategory of $\cat{D}$ consisting of compact
generators of $\cat{D}$. Suppose further that, for all
$D\in\cat{D}_1$, we have that $\alpha(D)$ is an isomorphism. Then one
observes that the full subcategory $\cat{D}_2$ of $\cat{D}$
consisting of all objects $D$ such that $\alpha(D)$ is an isomorphism
is obviously triangulated and contains $\cat{D}_1$. On the other hand,
it is easy to see that $\alpha$ is automatically compatible with
arbitrary direct sums, since $\fun{F}_1$ and $\fun{F}_2$ are
continuous. Hence $\cat{D}_2$ is closed under arbitrary direct sums
and so $\cat{D}_2=\cat{D}$ (see \cite{N1}), which proves that
$\alpha$ is an isomorphism.
\end{remark}

\subsection*{Tensor product of dg modules} Following \cite{Dr}, if $\dgA$ is a dg category, the \emph{tensor product} of $M\in\dgm{\dgA}$ and $N\in\dgm{\dgA\opp}$ is defined as
\begin{equation}\label{eqn:tensor}
M\otimes_\dgA N := \cok\left(\Xi\colon
\bigoplus_{A,B \in\dgA} M(B)\otimes_\K\dgA(A,B)\otimes_\K N(A)\tto
\bigoplus_{C \in\dgA} M(C) \otimes_\K N(C)
\right),
\end{equation}
where, given $v_1\in M(B)$ homogeneous of degree $m$, $f\colon A\to B$, homogeneous of degree $n$, and $v_2\in N(A)$, we have
\begin{equation}\label{eqn:tensor1}
\Xi((v_1,f,v_2)):=M(f)(v_1)\otimes v_2-(-1)^{mn}v_1\otimes N(f)(v_2)\in M(A)\otimes_\K N(A)\oplus M(B)\otimes_\K N(B).
\end{equation}
In this version, $M\otimes_\dgA N$ is a complex of $\K$-modules,
hence an object of $\Cdg\iso\dgm{\K}$. Clearly, one can repeat the same definition taking $M\in\dgm{\dgA\otimes\dgB}$ and $N\in\dgm{\dgB\opp\otimes\dgC}$. In this case, $M\otimes_\dgB N$ is an object in $\dgm{\dgA\otimes\dgC}$.

\begin{remark}\label{tensor1}
(i) The definition in \eqref{eqn:tensor} is dg functorial. More precisely, assume we have $M_1,M_2\in\dgm{\dgA\otimes\dgB}$, $N\in\dgm{\dgB\opp\otimes\dgC}$ and a natural transformation $f\colon M_1\to M_2$ of dg functors. Then it is straightforward that this induces a natural transformation $M_1\otimes_\dgB N\to M_2\otimes_\dgB N$ of dg functors.
	
(ii) Given $M\in\dgm{\dgA}$ and $N\in\dgm{\dgB}$, we can think of them
as objects in $\dgm{\dgA\otimes\K}$ and
$N\in\dgm{\K\opp\otimes\dgB}$. Now take the object $M\otimes_\K
N\in\dgm{\dgA\otimes\dgB}$. We claim that, for all
$(A,B)\in\dgA\otimes\dgB$, we get $(M\otimes_\K N)((A,B))=
M(A)\otimes_\K N(B)$, where the tensor product on the
right-hand side is the usual tensor product of complexes of
$\K$-modules. Indeed, as $\K$ consists of only
one object and every morphism is a scalar multiple of the identity,
the map $\Xi$ in \eqref{eqn:tensor1} is trivial in this case, and thus
we get the result from \eqref{eqn:tensor}. In the rest of the paper,
we write $M\otimes N$ for $M\otimes_\K N$.
\end{remark}

\begin{remark}\label{tensor2}
The tensor product of dg modules over a dg category looks very much
like the ordinary tensor product of modules over a (not necessarily
commutative) ring (see, for example, \cite[Sect.\ VI.7]{ML} for the
latter case). In particular, they share many properties, such as the
associativity. Of course they can be proved directly using the
definition. The easy computations are left to the reader, who, on the
other hand, can have a look at \cite[Sect.\ 6]{Mi} for the proof of
associativity in the case of tensor product of modules over a $\ZZ$-linear category.
\end{remark}

For a dg category $\dgA$, an $\dgA$-dg module $M$ is \emph{h-flat} if, for any $N\in\dgm{\dgA\opp}$ which is acyclic, the tensor product
$M\otimes_\dgA N$ is acyclic. One can check that any h-projective dg
module is h-flat and that a dg module which is homotopy equivalent to an h-flat dg module is h-flat itself (see \cite[Sect.\ 3.5]{KL}, for some more details).

\subsection*{Derived tensor product of dg categories}

We say that a dg category $\dgA$ is \emph{h-projective} if $\dgA(A,B)$
is in $\hproj{\K}$, for all $A,B\in\dgA$. It is clear that if $\dgA$
if h-projective, then $\dgA\opp$ is h-projective as well and all dg
categories are h-projective if $\K$ is a field. It is also easy to see
that if $\dgA$ and $\dgB$ are h-projective, then $\dgA\otimes\dgB$ is
h-projective, too. We denote by $\hpdgCat$ the full subcategory of $\dgCat$ consisting of h-projective dg categories.

\begin{remark}\label{rmk:hprojdg1}
Using \cite[Sect.\ 13.5]{Dr} (or the fact that $\dgCat$ is a model category \cite{TaDg}), for any dg category $\dgA$, one can construct
an explicit h-projective dg category $\hpdg{\dgA}$ with a
quasi-equivalence $\dgQ_\dgA\colon\hpdg{\dgA}\to\dgA$, which will be
fixed once and for all. In particular, if $\dgA$ is h-projective we
assume that $\dgQ_{\dgA}=\id_{\dgA}$. If not, following \cite{Dr},
$\hpdg{\dgA}$ can be constructed as a semi-free resolution of $\dgA$,
namely a semi-free dg category $\dgB$ with a quasi-equivalence
$\dgB\to\dgA$. Although it is not needed in the rest of the paper, let us briefly recall that, following \cite[Sect.\ 13.4]{Dr}, a dg category $\dgA$ is \emph{semi-free} if  it can be
represented as the union of an increasing sequence of dg subcategories $\dgA_i$, where $i\in\mathbb{N}$,
such that $\dgA_0$  is a discrete dg category and, for $i>0 $, each $\dgA_i$ is freely generated over $\dgA_{i-1}$, as a graded category, by a family of homogeneous morphisms $f_j$ whose differentials $d(f_j)$
are morphisms in $\dgA_{i-1}$.
It is then a simple calculation to see that $\hpdg{\dgA}(A,B)$ is h-projective for all $A,B\in\hpdg{\dgA}$ (one can also combine \cite[Lemma 13.6]{Dr}, \cite[Prop.\ 2.3 (3)]{To} and the fact that a cofibrant complex of $\K$-modules is h-projective).

Denoting by $\hpHqe$ the localization of $\hpdgCat$ by all
quasi-equivalences, it is then easy to verify that the natural functor
$\hpHqe\to\Hqe$ is an equivalence (one can use \cite[Lemma 13.5]{Dr} for the faithfulness).
\end{remark}

Hence, given two dg categories $\dgA$ and $\dgB$, we define the \emph{derived tensor product} as
\[
\dgA\lotimes\dgB:=\hpdg{\dgA}\otimes\dgB.
\]

\begin{remark}\label{qe}
Given an h-projective dg category $\dgA$ and a quasi-equivalence
$\dgF\colon\dgB\to\dgB'$, the induced dg functor
$\dgG:=\id_{\dgA}\otimes\dgF\colon\dgA\otimes\dgB\to\dgA\otimes\dgB'$ is
again a quasi-equivalence. Indeed, the complex $\dgA(A_1,A_2)$ is h-flat (being h-projective), for all $A_1,A_2\in\dgA$, and thus the maps
\[
\dgA\otimes\dgB((A_1,B_1),(A_2,B_2))\to\dgA\otimes\dgB'(\dgG((A_1,B_1)),\dgG((A_2,B_2)))
\]
are quasi-isomorphisms, for all $A_1,A_2\in\dgA$ and all
$B_1,B_2\in\dgB$, since the maps
$\dgB(B_1,B_2)\to\dgB'(\dgF(B_1),\dgF(B_2))$ are quasi-isomorphisms.
Hence it remains to show that $H^0(\id_{\dgA}\otimes\dgF)$ is essentially surjective. This is clear from the definition of tensor product of dg categories, being $H^0(\dgF)$ essentially surjective.

It follows from the universal property of
the localization of $\dgCat$ by quasi-equivalences that the functor
$\dgA\otimes\farg\colon\dgCat\to\dgCat$ naturally induces a
functor $\dgA\otimes\farg\colon\Hqe\to\Hqe$.

Similarly, given a dg category $\dgB$ and a quasi-equivalence $\dgG\colon\dgA\to\dgA'$, with $\dgA$ and $\dgA'$ h-projective, the induced dg functor $\dgG\otimes\id_{\dgB}\colon\dgA\otimes\dgB\to\dgA'\otimes\dgB$ is again a quasi-equivalence. Hence the functor
$\farg\otimes\dgB\colon\hpdgCat\to\dgCat$ naturally induces a
functor $\farg\otimes\dgB\colon\hpHqe\to\Hqe$.
\end{remark}

Putting Remarks \ref{rmk:hprojdg1} and \ref{qe} together, we get a
well-defined functor $\farg\lotimes\farg\colon\Hqe\times\Hqe\tto\Hqe$
which endows $\Hqe$ with a symmetric monoidal structure.

\subsection{Some properties of morphisms in \texorpdfstring{$\Hqe$}{Hqe}}\label{subsec:morHqe}

The content of this section is probably well known to experts. Moreover, most of the properties of $\dgCat$ and $\Hqe$ mentioned here have trivial proofs when we regard $\dgCat$ as a model category (see \cite{TaDg}). Nonetheless, to achieve a proof of Theorem \ref{thm:Toen} much less is needed and we sketch in this section the minimal amount of information which is required. Trying to keep the paper as much self-contained as possible, we outline the proofs of the results of this section.

Recall from \cite[Sect.\ 1]{Br} the notion of \emph{category of fibrant
objects}. Let $\cat{C}$ be a category with finite products and
assume that $\cat{C}$ has two distinguished classes of morphisms,
called weak equivalences and fibrations. A morphism which is both a
weak equivalence and a fibration will be called a trivial fibration. A
path object for $C\in\cat{C}$ is an object $\gpob{C}$ of $\cat{C}$
together with a weak equivalence $C\to\gpob{C}$ and a fibration
$\gpob{C}\to C\times C$ whose composition is the diagonal $C\to
C\times C$. We say that $\cat{C}$, together with its weak equivalences
and fibrations, is a category of fibrant objects if the following
axioms are satisfied.
\begin{itemize}
\item[(A)] Let $f$ and $g$ be morphisms of $\cat{C}$ such that $g\comp
f$ is defined. If two of the morphisms $f$, $g$ and $g\comp f$ are weak
equivalences, then so is the third. Any isomorphism is a weak
equivalence.
\item[(B)] The composition of two fibrations is a fibration. Any
isomorphism is a fibration.
\item[(C)] Fibrations and trivial fibrations are preserved by base
extension.
\item[(D)] For every $C\in\cat{C}$ there exists at least one path
object $\gpob{C}$.
\item[(E)] For every $C\in\cat{C}$ the morphism from $C$ to a terminal
object is a fibration.
\end{itemize}
Notice that if $\cat{C}$ is a model category, then the full
subcategory of $\cat{C}$ consisting of all fibrant objects is a
category of fibrant objects.

As in \cite[Sect.\ 2]{Br}, we say that two morphisms
$f_0,f_1\colon C\to D$ in a category of fibrant objects are \emph{homotopic}
if there exist a path object $D\to\gpob{D}\mor{(p_0,p_1)}D\times D$
and a morphism $h\colon C\to\gpob{D}$ (called \emph{homotopy}) such that
$f_i=p_i\comp h$, for $i=0,1$.

\begin{remark}\label{stdhtp}
If $g\colon B\to C$ is a weak equivalence and $f_0,f_1\colon C\to D$
are morphisms such that $f_0\comp g$ and $f_1\comp g$ are homotopic,
it follows from \cite[Prop.\ 1]{Br} that for any path object
$\gpob{D}$ for $D$ there exists a weak equivalence $g'\colon B'\to C$
such that $f_0\comp g'$ and $f_1\comp g'$ are homotopic by a homotopy
$B'\to\gpob{D}$.
\end{remark}

Now we want to prove that $\dgCat$ is a category of fibrant objects,
if one takes as weak equivalences the quasi-equivalences and as
fibrations the full dg functors whose $H^0$ is an isofibration (a
functor $\fun{F}\colon\cat{C}\to\cat{D}$ is an isofibration if for
every $C\in\cat{C}$ and every isomorphism $f\colon\fun{F}(C)\isomor D$
in $\cat{D}$ there exists an isomorphism $g\colon C\isomor C'$ in
$\cat{C}$ such that $\fun{F}(g)=f$). To this purpose, we need to fix
some notation.

As in \cite[Sect.\ 2.9]{Dr}, for every dg category $\dgA$ we denote
by $\dgMor{\dgA}$ the dg category whose objects are triples $(A,B,f)$
with $f\in Z^0(\dgA(A,B))$, and whose morphisms are given by
\[
\dgMor{\dgA}((A,B,f),(A',B',f'))^n:=\dgA(A,A')^n\oplus\dgA(B,B')^n
\oplus\dgA(A,B')^{n-1}
\]
for every $n\in\Z$. If $(a,b,h)\in\dgMor{\dgA}((A,B,f),(A',B',f'))^n$,
the differential is defined by
\[
d(a,b,h):=(d(a),d(b),d(h)+(-1)^n(f'\comp a-b\comp f))
\]
and the composition by
$(a',b',h')\comp(a,b,h):=(a'\comp a,b'\comp b,b'\comp h+(-1)^nh'\comp a)$.
Actually we will be interested in the full dg subcategory $\pob{\dgA}$
of $\dgMor{\dgA}$ with objects the triples $(A,B,f)$ such that $f$ is
a homotopy equivalence (see \cite[Sect.\ 3]{Ta}).

Notice that there is a natural dg functor
$\inc{\dgA}\colon\dgA\to\pob{\dgA}$, defined on objects by
$A\mapsto(A,A,\id_A)$ and on morphisms by
$f\mapsto(f,f,0)$. Similarly, there are obvious dg functors
$\src{\dgA},\tar{\dgA}\colon\pob{\dgA}\to\dgA$ (``source'' and
``target'') defined both on objects and morphisms as the projection
respectively on the first and on the second component.

\begin{lem}
With the above defined weak equivalences and fibrations, $\dgCat$ is a
category of fibrant objects.
\end{lem}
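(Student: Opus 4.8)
The plan is to verify the five axioms (A)--(E) directly, taking weak equivalences to be quasi-equivalences and fibrations to be full dg functors whose $H^0$ is an isofibration. Axiom (A) is the two-out-of-three property for quasi-equivalences: a dg functor $\dgF$ is a quasi-equivalence precisely when each $\dgA(A,B)\to\dgB(\dgF(A),\dgF(B))$ is a quasi-isomorphism \emph{and} $H^0(\dgF)$ is essentially surjective. The quasi-isomorphism condition satisfies two-out-of-three because quasi-isomorphisms of complexes do (and $\K$-linearity lets us compose the Hom-level maps); essential surjectivity of $H^0$ similarly obeys two-out-of-three since $H^0$ is a functor. The only subtlety is the mixed case $g\comp f$ and $f$ (say) quasi-equivalences forcing $g$ to be one: on Hom-complexes this is immediate, and for essential surjectivity one uses that $H^0(f)$ is essentially surjective, so every object of the target of $g$ is isomorphic to one in the image, reducing to the composite. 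Isomorphisms are clearly quasi-equivalences. Axiom (B) is immediate: a composite of full functors is full, and a composite of isofibrations is an isofibration (just chase the lifting of isomorphisms through the two functors); any dg isomorphism is full and induces an equivalence, hence an isofibration.

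For Axiom (C), I would check that fibrations and trivial fibrations are stable under base change, i.e.\ pullbacks along arbitrary dg functors. The pullback $\dgA\times_\dgC\dgB$ exists in $\dgCat$ (it is computed objectwise on objects and Hom-complexes, as $\dgCat$ has finite limits); I would verify that if $\dgF\colon\dgA\to\dgC$ is a full dg functor with $H^0(\dgF)$ an isofibration, then the projection $\dgA\times_\dgC\dgB\to\dgB$ is again full with $H^0$ an isofibration. Fullness is a direct diagram chase on Hom-complexes; for the isofibration property, given an isomorphism in $\dgB$ whose image in $\dgC$ one wants to lift, one uses the isofibration property of $H^0(\dgF)$ to lift it to $\dgA$ and then checks the lifts are compatible, i.e.\ land in the fibre product --- here one must be slightly careful that the lifted isomorphism in $\dgA$ maps to the prescribed isomorphism in $\dgC$ on the nose, which is exactly what the isofibration condition provides. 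The trivial-fibration case adds the quasi-isomorphism condition on Hom-complexes, which is preserved because in the pullback one can realise the Hom-complex of $\dgA\times_\dgC\dgB$ as a fibre product of complexes along a surjective quasi-isomorphism (fullness gives the needed surjectivity), and such fibre products compute the homotopy pullback.

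Axiom (E) is trivial: the terminal object of $\dgCat$ is the dg category with one object and endomorphism complex $\K$ (or even $0$, depending on the convention --- in any case a single object with trivial differential), and the unique functor $\dgA\to *$ is vacuously full on the relevant Hom-complexes and $H^0$ is trivially an isofibration. The heart of the argument, and the step I expect to be the main obstacle, is Axiom (D): producing a path object for every $\dgA$. This is exactly the point of the dg category $\pob{\dgA}$ introduced just before the lemma. I would show that the factorisation $\dgA\xrightarrow{\inc{\dgA}}\pob{\dgA}\xrightarrow{(\src{\dgA},\tar{\dgA})}\dgA\times\dgA$ composes to the diagonal (clear on objects and morphisms from the formulas), that $\inc{\dgA}$ is a quasi-equivalence, and that $(\src{\dgA},\tar{\dgA})$ is a fibration. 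For the quasi-equivalence claim, $H^0(\inc{\dgA})$ is essentially surjective because any object $(A,B,f)$ of $\pob{\dgA}$ with $f$ a homotopy equivalence is isomorphic in $H^0(\pob{\dgA})$ to $(A,A,\id_A)$ (use a homotopy inverse of $f$ to build the iso), and one computes that $\inc{\dgA}$ induces a quasi-isomorphism on Hom-complexes --- here the extra summand $\dgA(A,B')^{n-1}$ with the differential term $(-1)^n(f'\comp a - b\comp f)$ is an acyclic ``mapping cone''-type contribution, so the inclusion of $\dgA(A,A')\oplus\dgA(B,B')$ (with $f=\id$, $f'=\id$) is a quasi-isomorphism onto it, essentially by a standard contracting-homotopy argument. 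For $(\src{\dgA},\tar{\dgA})$ being a fibration: fullness is clear from the definition of the Hom-complexes of $\dgMor{\dgA}$ (given any pair $(a,b)$ one can take $h=0$), and for the isofibration property of $H^0$ one must lift a pair of isomorphisms $(A\xrightarrow{\sim}A', B\xrightarrow{\sim}B')$ in $H^0(\dgA)\times H^0(\dgA)$ starting from an object $(A,B,f)$ with $f$ a homotopy equivalence to an isomorphism in $H^0(\pob{\dgA})$ with source $(A,B,f)$; the target is forced to be $(A',B',f')$ with $f'$ the transported homotopy equivalence, and one checks this is indeed an isomorphism in $H^0(\pob{\dgA})$. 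Assembling these verifications gives the lemma.
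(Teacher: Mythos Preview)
Your approach is essentially identical to the paper's: both verify axioms (A)--(E) directly, using $\pob{\dgA}$ with the factorisation $\dgA\xrightarrow{\inc{\dgA}}\pob{\dgA}\xrightarrow{(\src{\dgA},\tar{\dgA})}\dgA\times\dgA$ as the path object, and the paper in fact leaves almost all of the checks you spell out to the reader. Two small corrections: the terminal object must have $0$ as its endomorphism complex (not $\K$), since fullness requires surjectivity onto the Hom-complex; and in your treatment of (C), the on-the-nose equality of the lifted morphism with the prescribed one in $\dgC$ comes from \emph{fullness} (correcting a representative by a boundary that lifts), not from the isofibration condition, which only gives agreement in $H^0$.
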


\begin{proof}
The result depends on some elementary (but tedious) checks which are left to the reader. We simply outline the main ingredients in the proof.

First of all observe that finite products exist in $\dgCat$, and they are given by
the corresponding products both on objects and on morphisms (with
differentials and compositions defined componentwise). In particular,
a terminal object is the dg category with one object and $0$ as the
space of morphisms.

Axioms (A), (B) and (E) are straightforward to check from the
definitions. As for axiom (C), note that for every dg functors
$\dgF\colon\dgA\to\dgC$ and $\dgG\colon\dgB\to\dgC$ the fibre product
$\dgD:=\dgA\times_{\dgC}\dgB$ along $\dgF$ and $\dgG$ exists in
$\dgCat$, and it is given by the full subcategory of $\dgA\times\dgB$
with objects those $(A,B)\in\dgA\times\dgB$ such that
$\dgF(A)=\dgG(B)$ and morphisms those morphisms $(f,g)$ of
$\dgA\times\dgB$ such that $\dgF(f)=\dgG(g)$. It is easy to show that, if
$\dgG$ is a fibration (resp.\ a trivial fibration), then the projection dg functor
$\dgD\to\dgA$ is a fibration (resp.\ a trivial fibration), too.

Passing to axiom (D), one shows that the dg functors
$\dgA\mor{\inc{\dgA}}\pob{\dgA}\mor{(\src{\dgA},\tar{\dgA})}\dgA\times\dgA$
define a path object for any dg category $\dgA$. More precisely, as the composition is clearly
the diagonal $\dgA\to\dgA\times\dgA$, one just shows that
$\inc{\dgA}$ is a quasi-equivalence and $(\src{\dgA},\tar{\dgA})$ is a
fibration.
\end{proof}

Two dg functors $\dgF,\dgG\colon\dgA\to\dgB$ will be called
\emph{standard homotopic} if there exists a dg functor
$\dgH\colon\dgA\to\pob{\dgB}$ such that $\dgF=\src{\dgB}\comp\dgH$ and
$\dgG=\tar{\dgB}\comp\dgH$. From the results proved in
\cite[Sect.\ 2]{Br} for the localization of an arbitrary category of
fibrant objects with respect to weak equivalences, we
obtain the following properties of morphisms in $\Hqe$.

\begin{prop}\label{Hqemor}
\begin{enumerate}
\item\label{Hqehtp} Given two dg functors $\dgF,\dgG\colon\dgA\to\dgB$, we
have $\hq{\dgF}=\hq{\dgG}$ if and only if there exists a
quasi-equivalence $\dgI\colon\dgA'\to\dgA$ such that $\dgF\comp\dgI$
and $\dgG\comp\dgI$ are (standard) homotopic.
\item\label{Hqecomm} Given two dg functors $\dgF\colon\dgA\to\dgC$ and
$\dgG\colon\dgB\to\dgC$ with $\dgG$ a quasi-equivalence, there exist
dg functors $\dgF'\colon\dgD\to\dgB$ and
$\dgG'\colon\dgD\to\dgA$ with $\dgG'$ a quasi-equivalence such that
$\dgF\comp\dgG'$ and $\dgG\comp\dgF'$ are homotopic (hence
$\hq{\dgF\comp\dgG'}=\hq{\dgG\comp\dgF'}$ by part
\eqref{Hqehtp}). If moreover $\dgF$ is a quasi-equivalence, then
$\dgF'$ is a quasi-equivalence, too.
\item\label{Hqefrac} Given morphisms $f_i\colon\dgA\to\dgB$ in
$\Hqe$, for $i=1,\dots,n$, there exist dg functors $\dgI\colon\dgA'\to\dgA$ and
$\dgF_i\colon\dgA'\to\dgB$ with $\dgI$ a quasi-equivalence such that
$f_i=\hq{\dgF_i}\comp\hq{\dgI}^{-1}$, for $i=1,\dots,n$.
\end{enumerate}
\end{prop}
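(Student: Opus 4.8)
The plan is to derive all three statements from the general theory of categories of fibrant objects developed in \cite[Sect.\ 2]{Br}, now that the previous lemma has identified $\dgCat$ (with quasi-equivalences as weak equivalences and full dg functors with $H^0$ an isofibration as fibrations) as such a category. The only extra point to address beyond a direct citation is the translation between the abstract notion of homotopy in a category of fibrant objects and the concrete notion of \emph{standard homotopic} dg functors introduced via the path object $\pob{\dgB}$; this is why the word ``standard'' appears parenthetically in \eqref{Hqehtp}.

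For \eqref{Hqehtp}, I would recall Brown's description of the localization: by \cite[Sect.\ 2]{Br}, morphisms in the localized category are represented by right fractions $\dgG\comp\hq{\dgI}^{-1}$ with $\dgI$ a weak equivalence, and two dg functors $\dgF,\dgG\colon\dgA\to\dgB$ become equal in $\Hqe$ precisely when there is a weak equivalence $\dgI\colon\dgA'\to\dgA$ with $\dgF\comp\dgI$ and $\dgG\comp\dgI$ homotopic in the sense recalled before Remark \ref{stdhtp}. It then remains to observe that any homotopy $h\colon\dgA'\to\gpob{\dgB}$ relative to an \emph{arbitrary} path object can be replaced, after precomposing with a further quasi-equivalence (this is exactly the content of Remark \ref{stdhtp}), by a homotopy relative to the specific path object $\dgA\mor{\inc{\dgB}}\pob{\dgB}\mor{(\src{\dgB},\tar{\dgB})}\dgB\times\dgB$, i.e.\ by a standard homotopy. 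The converse direction (standard homotopic implies equal in $\Hqe$) is immediate since $\inc{\dgB}$ is a quasi-equivalence, hence becomes invertible in $\Hqe$, and $\src{\dgB}\comp\inc{\dgB}=\tar{\dgB}\comp\inc{\dgB}=\id_{\dgB}$ forces $\hq{\src{\dgB}}=\hq{\tar{\dgB}}$ in $\Hqe$.

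Statement \eqref{Hqecomm} is the ``Ore-type'' condition for right fractions, which holds in any category of fibrant objects: given $\dgF\colon\dgA\to\dgC$ and a weak equivalence $\dgG\colon\dgB\to\dgC$, one factors $\dgG$ (using \cite[Sect.\ 1]{Br}, via a path object for $\dgC$ and a pullback) so as to produce $\dgD$ with a weak equivalence $\dgG'\colon\dgD\to\dgA$ and $\dgF'\colon\dgD\to\dgB$ making the square commute up to homotopy; the axiom (C) on stability of (trivial) fibrations under base change is what guarantees $\dgG'$ is again a weak equivalence. The additional claim, that $\dgF$ a quasi-equivalence forces $\dgF'$ a quasi-equivalence, then follows from the two-out-of-three axiom (A) applied to $\dgG\comp\dgF'\simeq\dgF\comp\dgG'$, since $\dgF$, $\dgG$, $\dgG'$ are all quasi-equivalences and homotopic dg functors induce the same morphism in $\Hqe$, hence one is a quasi-equivalence iff the other is. Finally, \eqref{Hqefrac} is a routine iteration of \eqref{Hqecomm}: each $f_i$ is individually a right fraction $\hq{\dgF_i}\comp\hq{\dgI_i}^{-1}$ by Brown's theorem, and one uses \eqref{Hqecomm} repeatedly to find a common denominator $\dgI\colon\dgA'\to\dgA$ dominating all the $\dgI_i$ simultaneously, adjusting the numerators accordingly.

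The only genuinely non-formal step is the one in \eqref{Hqehtp} reconciling abstract homotopies with standard homotopies through $\pob{\dgB}$, and even this is handled cleanly by Remark \ref{stdhtp}; everything else is a transcription of \cite[Sect.\ 2]{Br} into the present notation. I would therefore keep the write-up brief, citing \cite{Br} for the bulk and spelling out only the path-object comparison.
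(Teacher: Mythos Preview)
Your proposal follows the paper's proof essentially line for line: both appeal to \cite[Thm.\ 1]{Br} for \eqref{Hqehtp} (combined with Remark \ref{stdhtp} to pass from an arbitrary path object to the specific one $\pob{\dgB}$), to \cite[Prop.\ 2]{Br} for the first part of \eqref{Hqecomm}, and both handle \eqref{Hqefrac} by producing individual right fractions and then finding a common denominator via repeated use of \eqref{Hqecomm}.

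One step needs sharpening. In the last clause of \eqref{Hqecomm} you argue that $\dgG\comp\dgF'$ is a quasi-equivalence because it is homotopic to the quasi-equivalence $\dgF\comp\dgG'$, justifying this by ``homotopic dg functors induce the same morphism in $\Hqe$, hence one is a quasi-equivalence iff the other is''. But equality in $\Hqe$ only tells you that both become \emph{isomorphisms} in $\Hqe$; it is not among the axioms of a category of fibrant objects that every morphism inverted by the localization lies in the class of weak equivalences, so this inference is circular as stated. The paper instead proves directly, again via axiom (A), that a morphism homotopic to a weak equivalence is a weak equivalence: the projections $p_0,p_1\colon\gpob{\dgB}\to\dgB$ are weak equivalences (two-out-of-three applied to $p_i$ composed with the section $\dgB\to\gpob{\dgB}$), so if $p_0\comp h$ is a weak equivalence then $h$ is, and hence $p_1\comp h$ is as well. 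With this correction your argument goes through and coincides with the paper's.
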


\begin{proof}
Taking into account Remark \ref{stdhtp}, (1) follows from part
(ii) of \cite[Thm.\ 1]{Br}.
The first part of (2) follows from \cite[Prop.\ 2]{Br}. The last
statement in (2) is then an easy consequence of axiom (A), using the fact
that a morphism homotopic to a weak equivalence is also a weak
equivalence (to see this, one uses again axiom (A)).
For $n=1$, (3) follows from part (i) of \cite[Thm.\ 1]{Br}. In
general, one can easily reduce by induction to $n=2$. Then, by the
case $n=1$, for $i=1,2$ there exist dg functors $\dgI_i\colon\dgA_i\to\dgA$ and
$\dgG_i\colon\dgA_i\to\dgB$, with $\dgI_i$ a quasi-equivalence such that
$f_i=\hq{\dgG_i}\comp\hq{\dgI_i}^{-1}$. On the other hand, by part \eqref{Hqecomm}, there exist two quasi-equivalences $\dgJ_i\colon\dgA'\to\dgA_i$ such that $\hq{\dgI_1}\comp\hq{\dgJ_1}=\hq{\dgI_2}\comp\hq{\dgJ_2}$. Set $\dgF_i:=\dgG_i\comp\dgJ_i$ and $\dgI:=\dgI_1\comp\dgJ_1$. It is then clear that $f_i=\hq{\dgF_i}\comp\hq{\dgI}^{-1}$, for $i=1,2$.
\end{proof}

\begin{cor}\label{rephtp}
Let $\dgF,\dgG\colon\dgA\to\dgB$ be dg functors and assume that there
exists a termwise homotopy equivalence $\alpha\colon\dgF\to\dgG$ (in
particular, this is the case if $\dgF$ and $\dgG$ are homotopy
equivalent). Then $\hq{\dgF}=\hq{\dgG}\in\Hqe(\dgA,\dgB)$.
\end{cor}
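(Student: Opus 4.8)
The plan is to reduce everything to Proposition~\ref{Hqemor}\eqref{Hqehtp}: it suffices to exhibit a \emph{standard} homotopy between $\dgF$ and $\dgG$, since then one applies that statement with the quasi-equivalence $\id_{\dgA}\colon\dgA\to\dgA$.

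First I would dispose of the parenthetical claim. If $\dgF$ and $\dgG$ are homotopy equivalent, then by definition there is a closed degree $0$ morphism $\alpha$ in $\dgFun(\dgA,\dgB)(\dgF,\dgG)$ whose class is an isomorphism in $\Hc{\dgFun(\dgA,\dgB)}$. For every $A\in\Ob(\dgA)$ the evaluation dg functor $\dgFun(\dgA,\dgB)\to\dgB$, $\dgK\mapsto\dgK(A)$, induces a functor on homotopy categories which carries the class of $\alpha$ to the class of $\alpha(A)$; hence $\alpha(A)$ becomes invertible in $\Hc{\dgB}$, i.e.\ it is a homotopy equivalence, so $\alpha$ is a termwise homotopy equivalence (alternatively, invoke the first part of Remark~\ref{rmk:extnat}). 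Thus it is enough to treat the case of a termwise homotopy equivalence $\alpha\colon\dgF\to\dgG$.

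Next I would construct a dg functor $\dgH\colon\dgA\to\pob{\dgB}$ realizing a standard homotopy. On objects, set $\dgH(A):=(\dgF(A),\dgG(A),\alpha(A))$: this is an object of $\pob{\dgB}$ because $\alpha(A)\in Z^0(\dgB(\dgF(A),\dgG(A)))$ is a homotopy equivalence. On a homogeneous morphism $f\colon A\to A'$ of degree $n$ in $\dgA$, set $\dgH(f):=(\dgF(f),\dgG(f),0)\in\dgB(\dgF(A),\dgF(A'))^n\oplus\dgB(\dgG(A),\dgG(A'))^n\oplus\dgB(\dgF(A),\dgG(A'))^{n-1}$. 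Compatibility of $\dgH$ with units and with compositions is immediate from the formulas defining $\dgMor{\dgB}$, as the third components vanish identically. The only point requiring something is compatibility with the differentials: unwinding the differential of $\dgMor{\dgB}$ one gets $d(\dgH(f))=(d\dgF(f),d\dgG(f),(-1)^n(\alpha(A')\comp\dgF(f)-\dgG(f)\comp\alpha(A)))$, whereas $\dgH(d(f))=(d\dgF(f),d\dgG(f),0)$; these coincide precisely because $\alpha(A')\comp\dgF(f)=\dgG(f)\comp\alpha(A)$ for every homogeneous $f$, which is exactly the (graded) naturality of the degree $0$ morphism $\alpha$ of $\dgFun(\dgA,\dgB)$. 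By construction $\src{\dgB}\comp\dgH=\dgF$ and $\tar{\dgB}\comp\dgH=\dgG$, so $\dgF$ and $\dgG$ are standard homotopic and Proposition~\ref{Hqemor}\eqref{Hqehtp} yields $\hq{\dgF}=\hq{\dgG}\in\Hqe(\dgA,\dgB)$.

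I do not expect a genuine obstacle. The one place requiring care is the degree and sign bookkeeping in the differential of $\dgMor{\dgB}$: the naturality of $\alpha$ must be used in the form valid for morphisms of \emph{every} degree, not only for the degree $0$ ones (for which it would read as the familiar commuting square), and one must also remember that being closed of degree $0$ is exactly what makes each triple $(\dgF(A),\dgG(A),\alpha(A))$ a legitimate object of $\pob{\dgB}$.
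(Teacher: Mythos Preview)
Your proof is correct and follows exactly the same approach as the paper: you build the standard homotopy $\dgH\colon\dgA\to\pob{\dgB}$ by $A\mapsto(\dgF(A),\dgG(A),\alpha(A))$ and $f\mapsto(\dgF(f),\dgG(f),0)$ and then invoke Proposition~\ref{Hqemor}\eqref{Hqehtp}. The paper's proof is terser, omitting both the parenthetical remark and the differential check you spell out, but the argument is identical.
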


\begin{proof}
The assumption on $\alpha$ implies that there is a dg functor
$\dgH\colon\dgA\to\pob{\dgB}$ defined on objects by
$A\mapsto(\dgF(A),\dgG(A),\alpha(A))$ and on morphisms by
$a\mapsto(\dgF(a),\dgG(a),0)$. As $\dgH$ clearly gives a standard
homotopy between $\dgF$ and $\dgG$, we conclude that
$\hq{\dgF}=\hq{\dgG}$ by part \eqref{Hqehtp} of Proposition \ref{Hqemor}.
\end{proof}

\section{Extensions of morphisms in $\Hqe$ and bimodules}\label{sec:extensions}

In this section we develop the key ingredients in our proof of Theorem \ref{thm:Toen}. As it turns out, they rely on some natural properties of extension of dg functors. Finally, we provide an interpretation of the morphisms in $\Hqe$ in terms of dg modules over tensor dg categories.

\subsection{Extensions of dg functors}\label{subsect:ext}
Given two dg categories $\dgA$ and $\dgB$, by \eqref{tensorHom} there
is an isomorphism of dg categories
$\dgm{\dgA\opp\otimes\dgB}\iso\dgFun(\dgA,\dgm{\dgB})$, so in
particular an object $\kke\in\dgm{\dgA\opp\otimes\dgB}$ corresponds to
a dg functor $\dfun{\kke}\colon\dgA\to\dgm{\dgB}$. Conversely, for
every dg functor $\dgF\colon\dgA\to\dgm{\dgB}$ there exists a unique
$\kke\in\dgm{\dgA\opp\otimes\dgB}$ such that $\dfun{\kke}=\dgF$. An
object $\kke\in\hproj{\dgA\opp\otimes\dgB}$ is called \emph{right
quasi-representable} if $\dfun{\kke}(\dgA)\subset\essim{\dgB}$. The
full dg subcategory of $\hproj{\dgA\opp\otimes\dgB}$ consisting of all
right quasi-representable dg modules will be denoted by
$\rqr{\dgA\opp\otimes\dgB}$. Notice that $\rqr{\dgA\opp\otimes\dgB}$
is always closed under homotopy equivalences and, if $\dgA$ is the dg category
$\K$, then it is isomorphic to $\essim{\dgB}$.

Let $\dgF\colon\dgA\to\dgm{\dgB}$ be a dg functor corresponding to
$\kke\in\dgm{\dgA\opp\otimes\dgB}$. Following \cite[Sect.\ 6.1]{Ke1}, we define the \emph{extension} of $\dgF$ to be the dg functor
\[
\ext{\dgF}\colon\dgm{\dgA}\tto\dgm{\dgB}\qquad \ext{\dgF}(\farg):=\farg\otimes_{\dgA}\kke.
\]
Notice that the definition of $\ext{\dgF}$ is a reformulation of the usual notion of Kan extension in the context of dg functors. There is also a natural dg functor
\[
\res{\dgF}\colon\dgm{\dgB}\to\dgm{\dgA}\qquad\res{\dgF}(M):=\dgm{\dgB}(\dgF(\farg),M),
\]
for every $M\in\dgm{\dgB}$.

\begin{remark}
Notice that, in \cite{Ke1}, the dg functors $\ext{\dgF}$ and $\res{\dgF}$ above are denoted by $\mathrm{T}_\kke$ and $\mathrm{H}_\kke$ respectively.
\end{remark}

If $\dgG\colon\dgA\to\dgB$
is a dg functor, $\ext{\Yon{\dgB}\comp\dgG}$ is usually denoted by
$\Ind{\dgG}\colon\dgm{\dgA}\to\dgm{\dgB}$, whereas (due to the dg
version of Yoneda's
lemma) $\res{\Yon{\dgB}\comp\dgG}$ is dg isomorphic to the dg functor
$\Res{\dgG}\colon\dgm{\dgB}\to\dgm{\dgA}$ defined by
$\Res{\dgG}(M):=M(\dgG(\farg))$.

\begin{prop}\label{extres}
Let $\dgF\colon\dgA\to\dgm{\dgB}$ and $\dgG\colon\dgA\to\dgB$ be dg
functors.
\begin{enumerate}
\item\label{extresa} $\ext{\dgF}$ is left adjoint to $\res{\dgF}$
(hence $\Ind{\dgG}$ is left adjoint to $\Res{\dgG}$).
\item\label{extresi} $\ext{\dgF}\comp\Yon{\dgA}$ is dg isomorphic to
$\dgF$ and $H^0(\ext{\dgF})$ is continuous (hence
$\Ind{\dgG}\comp\Yon{\dgA}$ is dg isomorphic to
$\Yon{\dgB}\comp\dgG$ and $H^0(\Ind{\dgG})$ is continuous).
\item\label{extresh} $\ext{\dgF}(\hproj{\dgA})\subseteq\hproj{\dgB}$
if and only if $\dgF(\dgA)\subseteq\hproj{\dgB}$ (hence
$\Ind{\dgG}(\hproj{\dgA})\subseteq\hproj{\dgB}$).
\item\label{extresr} $\Res{\dgG}(\hproj{\dgB})\subseteq\hproj{\dgA}$
if and only if $\Res{\dgG}(\essim{\dgB})\subseteq\hproj{\dgA}$;
moreover, $H^0(\Res{\dgG})$ is always continuous.
\item\label{extresq} $\Ind{\dgG}\colon\hproj{\dgA}\to\hproj{\dgB}$ is
a quasi-equivalence if $\dgG$ is a quasi-equivalence.
\end{enumerate}
\end{prop}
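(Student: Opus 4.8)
The plan is to prove the statements in order, deriving each from the previous one plus the formal properties of the tensor product of dg modules recalled in Remarks \ref{tensor1} and \ref{tensor2}.

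For \eqref{extresa}, I would establish the adjunction $\ext{\dgF}\dashv\res{\dgF}$ directly from the definitions. Writing $\kke\in\dgm{\dgA\opp\otimes\dgB}$ for the dg module corresponding to $\dgF$, the functor $\ext{\dgF}$ sends $M$ to $M\otimes_\dgA\kke$, while $\res{\dgF}(N)=\dgm{\dgB}(\dgF(\farg),N)$. The required natural isomorphism
\[
\dgm{\dgB}(M\otimes_\dgA\kke,N)\iso\dgm{\dgA}(M,\dgm{\dgB}(\dgF(\farg),N))
\]
is the usual tensor–Hom adjunction; one checks it at the level of complexes of $\K$-modules using the explicit cokernel description \eqref{eqn:tensor} of $\otimes_\dgA$ and then verifies naturality in both variables. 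This is a routine (if slightly lengthy) bookkeeping exercise in the spirit of \cite[Sect.\ VI.7]{ML}. The statement for $\Ind{\dgG}$ and $\Res{\dgG}$ follows by specializing $\dgF=\Yon{\dgB}\comp\dgG$ and using the dg Yoneda lemma identification $\res{\Yon{\dgB}\comp\dgG}\iso\Res{\dgG}$ already noted in the text.

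For \eqref{extresi}, the dg isomorphism $\ext{\dgF}\comp\Yon{\dgA}\iso\dgF$ amounts to the computation $\Yon{\dgA}(A)\otimes_\dgA\kke\iso\kke(A,\farg)=\dgF(A)$, which is again a direct consequence of \eqref{eqn:tensor} (the tensor product of a representable module with $\kke$ collapses to an evaluation, exactly as in the module-over-a-ring case). Continuity of $H^0(\ext{\dgF})$ follows because $\farg\otimes_\dgA\kke$ commutes with arbitrary direct sums: the cokernel in \eqref{eqn:tensor} is built out of direct sums, which commute with the direct sums being taken in $\dgm{\dgA}$. For \eqref{extresh}, the "if" direction: given $\dgF(\dgA)\subseteq\hproj{\dgB}$, I would note that $\ext{\dgF}$ is left adjoint to $\res{\dgF}$ by \eqref{extresa}, so $H^0(\ext{\dgF})$ sends acyclic modules' left orthogonal appropriately — more concretely, since $\Yon{\dgA}(\dgA)$ generates $\hproj{\dgA}$ as a triangulated category closed under direct sums, and $\ext{\dgF}$ is exact, continuous (by \eqref{extresi}) and sends the generators $\Yon{\dgA}(A)$ to $\dgF(A)\in\hproj{\dgB}$, its essential image on $\hproj{\dgA}$ lies in the triangulated subcategory of $\dgm{\dgB}$ generated under direct sums by $\hproj{\dgB}$, which is $\hproj{\dgB}$ itself. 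The "only if" direction is immediate by evaluating on representables via \eqref{extresi}. For \eqref{extresr}, the equivalence of the two conditions is the same generation argument applied to $\Res{\dgG}$ (note $\Res{\dgG}(\Yon{\dgB}(B))=\Yon{\dgB}(B)(\dgG(\farg))=\dgB(\dgG(\farg),B)$), using exactness and continuity of $H^0(\Res{\dgG})$; the latter is clear since $\Res{\dgG}$ is defined by precomposition, hence commutes with all limits and colimits computed termwise in $\Cdg$.

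Finally, for \eqref{extresq}: if $\dgG$ is a quasi-equivalence, I would show $\Ind{\dgG}\colon\hproj{\dgA}\to\hproj{\dgB}$ is a quasi-equivalence by exhibiting its quasi-inverse as (the restriction of) $\Res{\dgG}$. By \eqref{extresa} there is an adjunction unit $\id\to\Res{\dgG}\comp\Ind{\dgG}$ and counit $\Ind{\dgG}\comp\Res{\dgG}\to\id$; I would check these are termwise homotopy equivalences on $\hproj{\dgA}$ and $\hproj{\dgB}$ respectively. On representables this is the assertion that $\dgB(\dgG(\farg),\dgG(A))\iso\dgA(\farg,A)$ up to quasi-isomorphism, which is exactly the fully-faithfulness part of $\dgG$ being a quasi-equivalence; on a set of compact generators it extends by the general principle of Remark \ref{rmk:extnat} (the unit and counit are natural transformations between exact continuous functors, isomorphisms on the generators $\Yon{}(A)$, hence isomorphisms everywhere), so $H^0(\Ind{\dgG})$ and $H^0(\Res{\dgG})$ are mutually inverse equivalences. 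That $\Ind{\dgG}$ induces quasi-isomorphisms on Hom-complexes then follows from the adjunction isomorphism $\dgm{\dgB}(\Ind{\dgG}M,\Ind{\dgG}N)\iso\dgm{\dgA}(M,\Res{\dgG}\Ind{\dgG}N)$ combined with the unit being a quasi-isomorphism. The main obstacle I anticipate is \eqref{extresq}: one must be careful that essential surjectivity of $H^0(\dgG)$ (not just fully-faithfulness) is what guarantees the counit is an isomorphism on a generating set of $\hproj{\dgB}$, and that the generation-plus-continuity argument of Remark \ref{rmk:extnat} genuinely applies — i.e. that $\Yon{\dgB}(\dgB)$ is a set of compact generators of $H^0(\hproj{\dgB})$ and that $H^0(\Ind{\dgG})$ is continuous, both of which have been arranged by \eqref{extresi} and the remark on closure under direct sums.
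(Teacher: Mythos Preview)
Your approach matches the paper's, which in fact just sketches the ingredients and defers to \cite{Dr}, \cite{Ke1} and \cite{LO} for the details; in particular \eqref{extresq} is simply attributed to \cite[Remark~4.3]{Dr}.

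There is, however, one genuine subtlety in your treatment of \eqref{extresq}: you want to exhibit $\Res{\dgG}$ as a quasi-inverse to $\Ind{\dgG}$ on h-projectives, but $\Res{\dgG}$ need not send $\hproj{\dgB}$ into $\hproj{\dgA}$ even when $\dgG$ is a quasi-equivalence --- for instance $\Res{\dgG}(\Yon{\dgB}(B))=\dgB(\dgG(\farg),B)$ is in general only quasi-isomorphic, not homotopy equivalent, to a representable $\dgA$-module. Correspondingly, the unit on a representable is only a quasi-isomorphism in $\dgm{\dgA}$, not an isomorphism in $H^0(\dgm{\dgA})$, so Remark~\ref{rmk:extnat} does not apply as stated. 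The repair is easy and uses exactly the pieces you have already assembled: argue directly that $\Ind{\dgG}$ is quasi-fully faithful (the adjunction gives $\dgm{\dgB}(\Ind{\dgG}M,\Ind{\dgG}N)\iso\dgm{\dgA}(M,\Res{\dgG}\Ind{\dgG}N)$, and since $M\in\hproj{\dgA}$ it suffices that the unit $N\to\Res{\dgG}\Ind{\dgG}N$ be a quasi-isomorphism, which holds on representables and propagates by the generation principle applied in the \emph{derived} category rather than in $H^0(\dgm{\dgA})$), and then that $H^0(\Ind{\dgG})$ is essentially surjective (its essential image is triangulated, closed under direct sums, and contains $\essim{\dgB}$ by essential surjectivity of $H^0(\dgG)$).
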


\begin{proof}
All the statements above are probably well known (see \cite{Ke1}). Thus we simply
sketch the main ingredients in the proofs. The proof of
\eqref{extresa} uses exactly the same argument as in
\cite[Sect.\ 14.9]{Dr} for the adjunction between $\Ind{\dgG}$ and $\Res{\dgG}$.
The first part of \eqref{extresi} follows from Eq.\ (14.2) in
\cite{Dr}, and $H^0(\ext{\dgF})$ is continuous because it is left
adjoint to $H^0(\res{\dgF})$ by \eqref{extresa}.

The non-trivial
implication in \eqref{extresh} is a consequence of \eqref{extresi} and
of $\dgF(\dgA)\subseteq\hproj{\dgB}$. Indeed, we use here that the
objects of $\dgA$ form a set of compact generators for $\hproj{\dgA}$
(see \cite[Sect.\ 4.2]{Ke1}) and that $H^0(\dgF)$ is continuous by
\eqref{extresi}, arguing exactly as at the end of Remark \ref{rmk:extnat}.

A similar argument applies to
the first part of \eqref{extresr}. For the fact that $H^0(\Res{\dgG})$
is continuous, we use that $\Res{\dgG}$ has a right adjoint (see, for
example, \cite[Sect.\ 1]{LO}). Finally, \eqref{extresq} is observed in \cite[Remark 4.3]{Dr}.
\end{proof}

Let $\dgh{\dgA\opp\otimes\dgB}$ be the full dg subcategory of
$\dgm{\dgA\opp\otimes\dgB}$ with objects $\kke$ such that
$\dfun{\kke}(\dgA)\subseteq\hproj{\dgB}$. Denoting by
$\dgid{\dgA}\in\dgm{\dgA\opp\otimes\dgA}$ the object such that
$\dfun{\dgid{\dgA}}=\Yon{A}\colon\dgA\to\dgm{\dgA}$, obviously
$\dgid{\dgA}\in\dgh{\dgA\opp\otimes\dgA}$.

\begin{remark}\label{qiso}
If $\alpha\colon\kke\to\kke'$ is a quasi-isomorphism in
$\dgh{\dgA\opp\otimes\dgB}$, then clearly
$\dfun{\alpha}\colon\dfun{\kke}\to\dfun{\kke'}$ is such that
$\dfun{\alpha}(A)$ is a quasi-isomorphism in $\hproj{\dgB}$, and hence
a homotopy equivalence, for every $A\in\dgA$. In other words, $\dfun{\alpha}$ is a termwise
homotopy equivalence.
\end{remark}

\begin{lem}\label{hproj}
The inclusion $\hproj{\dgA\opp\lotimes\dgB}\subseteq\dgh{\dgA\opp\lotimes\dgB}$ holds.
\end{lem}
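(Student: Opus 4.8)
The plan is to unwind the definitions so that the lemma becomes a statement about restriction of dg modules along a ``slice'' dg functor, and then to derive that statement formally from Proposition~\ref{extres} together with the h-projectivity of the relevant tensor factor. First I would write $\dgA\opp\lotimes\dgB=\dgC\opp\otimes\dgB$, where $\dgC:=(\hpdg{\dgA\opp})\opp$ is h-projective, being the opposite of the h-projective dg category $\hpdg{\dgA\opp}$. Under the identification $\dgm{\dgC\opp\otimes\dgB}\iso\dgFun(\dgC,\dgm{\dgB})$, $\kke\mapsto\dfun{\kke}$, an object $\kke$ belongs to $\dgh{\dgA\opp\lotimes\dgB}$ precisely when $\dfun{\kke}(A)\in\hproj{\dgB}$ for every object $A$ of $\dgC$. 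Since $\dfun{\kke}(A)$ is the $\dgB$-dg module $\kke(A,\farg)$, it coincides with $\Res{\dgG_A}(\kke)$ for the dg functor $\dgG_A\colon\dgB\to\dgC\opp\otimes\dgB$, $B\mapsto(A,B)$, $f\mapsto\id_A\otimes f$ (the sign bookkeeping needed to see that $\dgG_A$ is a dg functor is trivial, as $\id_A$ sits in degree $0$). So the lemma reduces to showing that $\Res{\dgG_A}(\hproj{\dgC\opp\otimes\dgB})\subseteq\hproj{\dgB}$ for every such $A$.

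Next I would invoke Proposition~\ref{extres}\eqref{extresr}, which replaces the last inclusion by the a priori weaker one $\Res{\dgG_A}(\essim{\dgC\opp\otimes\dgB})\subseteq\hproj{\dgB}$. Every object of $\essim{\dgC\opp\otimes\dgB}$ is homotopy equivalent to a representable, $\Res{\dgG_A}$ is a dg functor (hence carries homotopy equivalences to homotopy equivalences), and $\hproj{\dgB}$ is closed under homotopy equivalences; therefore it is enough to treat the representables $\Yon{\dgC\opp\otimes\dgB}(A_0,B_0)$. A direct computation from the definitions of $\Yon{\farg}$ and $\Res{\dgG_A}$ then identifies $\Res{\dgG_A}(\Yon{\dgC\opp\otimes\dgB}(A_0,B_0))$, termwise, with $B\mapsto\dgC(A_0,A)\otimes_\K\dgB(B,B_0)$, that is, with $\dgC(A_0,A)\otimes\Yon{\dgB}(B_0)$ in the notation of Remark~\ref{tensor1}.

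To conclude I would note that $\dgC(A_0,A)\in\hproj{\K}$ (this is exactly where h-projectivity of $\dgC$, equivalently of $\hpdg{\dgA\opp}$, is used, and it is indispensable: tensoring $\Yon{\dgB}(B_0)$ by, say, $\K/2\K$ over $\K=\ZZ$ need not be h-projective), while $\Yon{\dgB}(B_0)\in\hproj{\dgB}$, and then apply the general fact that $P\otimes N\in\hproj{\dgB}$ whenever $P\in\hproj{\K}$ and $N\in\hproj{\dgB}$. Within the machinery already assembled this is immediate: $N$ corresponds to a dg functor $\dfun{N}\colon\K\to\dgm{\dgB}$ whose extension is $\ext{\dfun{N}}(\farg)=\farg\otimes N$, so $\dfun{N}(\K)=\{N\}\subseteq\hproj{\dgB}$, and Proposition~\ref{extres}\eqref{extresh} yields $\ext{\dfun{N}}(\hproj{\K})\subseteq\hproj{\dgB}$, i.e.\ $P\otimes N\in\hproj{\dgB}$.

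I do not anticipate a genuine obstacle: the argument is essentially bookkeeping, and conceptually it is the dg analogue of the elementary fact that an $R$--$S$-bimodule projective over $R\otimes_\K S\opp$ is projective over $S$ once $R$ is projective over $\K$. The two points that require attention are (i) fixing the variances so that the h-projective factor is visibly $\dgC=(\hpdg{\dgA\opp})\opp$ and $\dfun{\kke}(A)$ is correctly recognized as $\Res{\dgG_A}(\kke)$, and (ii) using only those parts of Proposition~\ref{extres} whose proofs are independent of the present lemma; inspection of the proof of Proposition~\ref{extres} shows that \eqref{extresh} and \eqref{extresr} are, so no circularity arises.
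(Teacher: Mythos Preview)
Your proof is correct and follows essentially the same route as the paper's: reduce to the h-projective tensor factor (the paper simply assumes $\dgA$ h-projective, you keep $\dgC=(\hpdg{\dgA\opp})\opp$ explicit), identify $\dfun{\kke}(A)$ with $\Res{\dgG_A}(\kke)$, invoke Proposition~\ref{extres}\eqref{extresr} to test on representables, and compute $\Res{\dgG_A}(\Yon{\dgC\opp\otimes\dgB}(A_0,B_0))\iso\dgC(A_0,A)\otimes\Yon{\dgB}(B_0)$. Your extra justification that $P\otimes N\in\hproj{\dgB}$ for $P\in\hproj{\K}$ via Proposition~\ref{extres}\eqref{extresh} is a nice touch the paper leaves implicit.
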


\begin{proof}
First, we can assume, without loss of generality, that $\dgA$ is h-projective and work with $\dgA\opp\otimes\dgB$. Then, for an object $A$ of $\dgA$, consider the inclusion
$\dgI_A\colon\dgB\to\dgA\opp\otimes\dgB$ defined as $\dgI_A(B):=(A,B)$, for
all $B$ in $\dgB$. Now, given $\kke\in\dgm{\dgA\opp\otimes\dgB}$ and
$A$ in $\dgA$, we have $\dfun{\kke}(A)=\Res{\dgI_A}(\kke)$. Thus, it is
enough to observe that
$\Res{\dgI_A}(\hproj{\dgA\opp\otimes\dgB})\subseteq\hproj{\dgB}$. For
this, one applies part \eqref{extresr} of Proposition \ref{extres},
since $\Res{\dgI_A}(\Yon{\dgA\opp\otimes\dgB}(A',B))\iso\dgA(A',A)\otimes
\Yon{\dgB}(B)\in\hproj{\dgB}$ (thanks to the fact that
$\dgA(A',A)\in\hproj{\K}$, $\dgA$ being h-projective), for all $(A',B)$ in
$\dgA\opp\otimes\dgB$.
\end{proof}

\begin{lem}\label{extfun}
The map $\kke\mapsto\dgFM{\kke}$ extends to a dg functor
\[
\dgm{\dgA\opp\otimes\dgB}\tto\dgFun(\dgm{\dgA},\dgm{\dgB}),
\]
which
restricts to a dg functor
$\dgh{\dgA\opp\otimes\dgB}\to\dgFun(\hproj{\dgA},\hproj{\dgB})$.
\end{lem}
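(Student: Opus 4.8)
The plan is to construct the dg functor explicitly, first on objects and then on morphisms, and afterwards to deduce the restriction statement directly from Proposition~\ref{extres}. On objects there is no choice, $\kke\mapsto\dgFM{\kke}$, and one recalls that $\dgFM{\kke}(M)=M\otimes_{\dgA}\kke$, the cokernel in \eqref{eqn:tensor}. I would first deal with morphisms. Let $f\colon\kke_1\to\kke_2$ be a morphism in $\dgm{\dgA\opp\otimes\dgB}$ homogeneous of degree $n$; equivalently, by \eqref{tensorHom}, a degree-$n$ natural transformation $\dfun{\kke_1}\to\dfun{\kke_2}$ with components $f(C)\colon\dfun{\kke_1}(C)\to\dfun{\kke_2}(C)$ in $\dgm{\dgB}$. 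For every $M\in\dgm{\dgA}$, the degree-$n$ morphism $\bigoplus_C\id_{M(C)}\otimes f(C)$ from $\bigoplus_C M(C)\otimes\dfun{\kke_1}(C)$ to $\bigoplus_C M(C)\otimes\dfun{\kke_2}(C)$ is, by an easy check using \eqref{eqn:tensor1}, compatible with the maps $\Xi$ of \eqref{eqn:tensor} (for $\kke_1$ and $\kke_2$), and therefore descends to a degree-$n$ morphism $(\ext{f})_M\in\dgm{\dgB}(\dgFM{\kke_1}(M),\dgFM{\kke_2}(M))^n$. For $n=0$ and $f$ closed this is exactly Remark~\ref{tensor1}(i); the general case is the same computation with the evident Koszul signs.

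Next I would verify that $(\ext{f})_M$ is natural in $M$ in the graded sense, so that it assembles into a degree-$n$ morphism $\ext{f}\colon\dgFM{\kke_1}\to\dgFM{\kke_2}$ in $\dgFun(\dgm{\dgA},\dgm{\dgB})$: for $g\colon M\to M'$ of degree $m$, both $\dgFM{\kke_2}(g)\comp(\ext{f})_M$ and $(-1)^{mn}(\ext{f})_{M'}\comp\dgFM{\kke_1}(g)$ are induced by $\bigoplus_C g(C)\otimes f(C)$, hence coincide. Finally one checks that $\kke\mapsto\dgFM{\kke}$, $f\mapsto\ext{f}$ is $\K$-linear on morphisms, sends identities to identities, commutes with the differential and respects composition; all of this is immediate from the componentwise description above and the corresponding properties of $\otimes$ over $\K$. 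This produces the dg functor $\dgm{\dgA\opp\otimes\dgB}\tto\dgFun(\dgm{\dgA},\dgm{\dgB})$. (Equivalently, by \eqref{tensorHom} such a dg functor is the same datum as a dg functor $\dgm{\dgA\opp\otimes\dgB}\otimes\dgm{\dgA}\tto\dgm{\dgB}$, and everything above amounts to the assertion that $(\kke,M)\mapsto M\otimes_{\dgA}\kke$ is a dg bifunctor; compare Remark~\ref{tensor2}.)

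For the restriction, let $\kke\in\dgh{\dgA\opp\otimes\dgB}$, so that $\dfun{\kke}(\dgA)\subseteq\hproj{\dgB}$. Then part~\eqref{extresh} of Proposition~\ref{extres} gives $\dgFM{\kke}(\hproj{\dgA})\subseteq\hproj{\dgB}$, hence $\dgFM{\kke}$ restricts to a dg functor $\hproj{\dgA}\to\hproj{\dgB}$, that is, to an object of $\dgFun(\hproj{\dgA},\hproj{\dgB})$. Because $\hproj{\dgA}\subseteq\dgm{\dgA}$ and $\hproj{\dgB}\subseteq\dgm{\dgB}$ are \emph{full} dg subcategories, for $\kke_1,\kke_2\in\dgh{\dgA\opp\otimes\dgB}$ retaining only the components $(\ext{f})_M$ with $M\in\hproj{\dgA}$ turns $\ext{f}$ into a morphism of the restricted dg functors, and this operation is compatible with the differential and with composition. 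Therefore the dg functor of the first part restricts to $\dgh{\dgA\opp\otimes\dgB}\to\dgFun(\hproj{\dgA},\hproj{\dgB})$, as wanted.

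The only step carrying genuine content is the dg-bifunctoriality of $\farg\otimes_{\dgA}\farg$: upgrading Remark~\ref{tensor1}(i) from closed degree-$0$ morphisms to arbitrary degrees, and tracking the Koszul signs through the composition and differential identities. As the authors point out in Remark~\ref{tensor2}, these verifications are of the same routine nature as the associativity of the tensor product of modules over a ring, so I would content myself with exhibiting the componentwise maps and leaving the sign bookkeeping to the reader.
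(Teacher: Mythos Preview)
Your proposal is correct and follows essentially the same approach as the paper: the first part is the dg-functoriality of the tensor product alluded to in Remark~\ref{tensor1}(i), and the restriction is handled via part~\eqref{extresh} of Proposition~\ref{extres}. The paper's proof is a two-line appeal to these facts, whereas you have spelled out the construction and sign checks that the paper leaves implicit in the phrase ``dg functorial''; in particular your observation that Remark~\ref{tensor1}(i), as stated, needs to be read for morphisms of arbitrary degree (not just closed degree-$0$ ones) is exactly the small unpacking the paper leaves to the reader.
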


\begin{proof}
The first part of the statement is a simple consequence of the fact that, by definition, the tensor product of dg modules is functorial (see Remark \ref{tensor1} (i)). For the second part, observe that, by definition, $\dfun{\kke}(\dgA)\subseteq\hproj{\dgB}$, when $\kke\in\dgh{\dgA\opp\otimes\dgB}$. Hence we apply part \eqref{extresh} of Proposition \ref{extres}.
\end{proof}

Lemma \ref{extfun} implies that, given a dg functor $\dgF\colon\dgA\to\hproj{\dgB}$, we can think of the extension of $\dgF$
 as a dg functor $\ext{\dgF}\colon\hproj{\dgA}\to\hproj{\dgB}$.

\begin{lem}\label{extcomp}
Given two dg functors $\dgF\colon\dgA\to\dgm{\dgB}$ and
$\dgG\colon\dgB\to\dgm{\dgC}$, there is a dg isomorphism of dg
functors
\[\ext{\ext{\dgG}\comp\dgF}\iso\ext{\dgG}\comp\ext{\dgF}\colon
\dgm{\dgA}\to\dgm{\dgC}.\]
Moreover, if $\dgF'\colon\dgA\to\dgB$ and $\dgG'\colon\dgB\to\dgC$ are
two other dg functors, then there are also dg isomorphisms
$\ext{\dgG\comp\dgF'}\iso\ext{\dgG}\comp\Ind{\dgF'}$ and
$\ext{\Ind{\dgG'}\comp\dgF}\iso\Ind{\dgG'}\comp\ext{\dgF}$.
\end{lem}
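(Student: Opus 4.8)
The statement to prove is Lemma \ref{extcomp}: for dg functors $\dgF\colon\dgA\to\dgm{\dgB}$ and $\dgG\colon\dgB\to\dgm{\dgC}$ there is a dg isomorphism $\ext{\ext{\dgG}\comp\dgF}\iso\ext{\dgG}\comp\ext{\dgF}$, together with the two variants involving $\Ind{\dgF'}$ and $\Ind{\dgG'}$. The heart of the matter is the main isomorphism; the two variants should follow formally. My strategy is to unwind everything to the level of bimodules and tensor products, and reduce the whole lemma to the associativity of the tensor product of dg modules (Remark \ref{tensor2}). Concretely, let $\kke\in\dgm{\dgA\opp\otimes\dgB}$ correspond to $\dgF$ and let $\kkf\in\dgm{\dgB\opp\otimes\dgC}$ correspond to $\dgG$, so that $\ext{\dgF}(\farg)=\farg\otimes_\dgA\kke$ and $\ext{\dgG}(\farg)=\farg\otimes_\dgB\kkf$. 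The first thing I would check is that the composite $\ext{\dgG}\comp\dgF\colon\dgA\to\dgm{\dgC}$ corresponds, under the identification $\dgm{\dgA\opp\otimes\dgC}\iso\dgFun(\dgA,\dgm{\dgC})$, to the bimodule $\kke\otimes_\dgB\kkf\in\dgm{\dgA\opp\otimes\dgC}$ (using that $\ext{\dgG}\comp\dfun{\kke}$ sends $A$ to $\dfun{\kke}(A)\otimes_\dgB\kkf$, and that the assignment $A\mapsto \kke(A,-)\otimes_\dgB\kkf$ is precisely the bimodule $\kke\otimes_\dgB\kkf$, by the functoriality in Remark \ref{tensor1}(i)).

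Granting that identification, the main isomorphism becomes the statement that for every $M\in\dgm{\dgA}$ we have
\[
M\otimes_\dgA(\kke\otimes_\dgB\kkf)\iso(M\otimes_\dgA\kke)\otimes_\dgB\kkf,
\]
naturally and dg-functorially in $M$. This is exactly associativity of the tensor product of dg modules, which the paper has already flagged as routine in Remark \ref{tensor2}. So the key steps, in order, are: (1) translate $\dgF,\dgG$ into bimodules $\kke,\kkf$; (2) identify the bimodule attached to $\ext{\dgG}\comp\dgF$ as $\kke\otimes_\dgB\kkf$, carefully tracking the $\dgA\opp$- and $\dgC$-actions; (3) invoke associativity of $\farg\otimes_\farg\farg$ to produce the natural dg isomorphism $\ext{\ext{\dgG}\comp\dgF}\iso\ext{\dgG}\comp\ext{\dgF}$; (4) deduce the two variants by specialising, using that $\Ind{\dgF'}=\ext{\Yon{\dgB}\comp\dgF'}$ and $\Ind{\dgG'}=\ext{\Yon{\dgC}\comp\dgG'}$ and the dg-Yoneda identifications $\ext{\dgG}\comp\Yon{\dgB}\iso\dgG$, $\Ind{\dgF'}\comp\Yon{\dgA}\iso\Yon{\dgB}\comp\dgF'$ recorded in Proposition \ref{extres}\eqref{extresi}. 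For the second variant, one writes $\ext{\Ind{\dgG'}\comp\dgF}=\ext{\ext{\Yon{\dgC}\comp\dgG'}\comp\dgF}$ and applies the main isomorphism with $\dgG$ replaced by $\Yon{\dgC}\comp\dgG'$; for the first, one similarly uses $\dgG\comp\dgF'=(\ext{\dgG}\comp\Yon{\dgB})\comp\dgF'$ together with $\ext{\dgG\comp\dgF'}\iso\ext{\ext{\dgG}\comp(\Yon{\dgB}\comp\dgF')}\iso\ext{\dgG}\comp\ext{\Yon{\dgB}\comp\dgF'}=\ext{\dgG}\comp\Ind{\dgF'}$.

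**Main obstacle.**
The genuinely fiddly point is step (2): verifying that the $\dgA\opp\otimes\dgC$-bimodule structure on $A\mapsto\dfun{\kke}(A)\otimes_\dgB\kkf$ coming from the composite $\ext{\dgG}\comp\dfun{\kke}$ agrees on the nose with the bimodule $\kke\otimes_\dgB\kkf$ built directly from the tensor-product formula \eqref{eqn:tensor}, including getting the Koszul signs right when a homogeneous morphism of $\dgA$ or $\dgC$ is moved past the tensor factors. This is the same kind of bookkeeping that underlies associativity itself, so once it is done the remaining steps are formal. I would therefore isolate the claim ``the bimodule of $\ext{\dgG}\comp\dgF$ is $\kke\otimes_\dgB\kkf$'' as the crux and present it cleanly, then let associativity and the Yoneda identities of Proposition \ref{extres} do the rest, leaving the sign-chasing to the reader in the spirit of Remark \ref{tensor2}.
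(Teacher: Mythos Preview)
Your proposal is correct and follows essentially the same route as the paper: translate to bimodules, reduce the main isomorphism to associativity of the tensor product of dg modules (Remark \ref{tensor2}), and then specialise via $\Ind{\dgF'}=\ext{\Yon{\dgB}\comp\dgF'}$, $\Ind{\dgG'}=\ext{\Yon{\dgC}\comp\dgG'}$ together with Proposition \ref{extres}\eqref{extresi} for the two variants.

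The one tactical difference worth noting is that the paper sidesteps what you flagged as the ``main obstacle'' (your step (2), the direct sign-check that the bimodule of $\ext{\dgG}\comp\dgF$ is $\kke\otimes_{\dgB}\kkf$). Instead of verifying that identification by hand, the paper reverses the order: it first uses associativity to obtain $\ext{\dgG}\comp\ext{\dgF}\iso\ext{\dgH}$ with $\dgH:=\dfun{\kke\otimes_{\dgB}\kkf}$, and then deduces $\dgH\iso\ext{\dgG}\comp\dgF$ purely from Proposition \ref{extres}\eqref{extresi} via
\[
\dgH\iso\ext{\dgH}\comp\Yon{\dgA}\iso\ext{\dgG}\comp\ext{\dgF}\comp\Yon{\dgA}\iso\ext{\dgG}\comp\dgF.
\]
Since dg-isomorphic functors have dg-isomorphic extensions (Lemma \ref{extfun}), this gives $\ext{\ext{\dgG}\comp\dgF}\iso\ext{\dgH}\iso\ext{\dgG}\comp\ext{\dgF}$ without any explicit Koszul bookkeeping. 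Your direct verification is fine too, but the Yoneda shortcut is cleaner.
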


\begin{proof}
Let $\kkf\in\dgm{\dgA\opp\otimes\dgB}$ and
$\kkg\in\dgm{\dgB\opp\otimes\dgC}$ be such that $\dgF=\dfun{\kkf}$ and
$\dgG=\dfun{\kkg}$. Then $\ext{\dgG}\comp\ext{\dgF}\iso\ext{\dgH}$,
where $\dgH:=\dfun{\kkf\otimes_{\dgB}\kkg}$, by the associativity of
the tensor product. Using part \eqref{extresi} of Proposition
\ref{extres}, it follows that
\[\dgH\iso\ext{\dgH}\comp\Yon{\dgA}\iso
\ext{\dgG}\comp\ext{\dgF}\comp\Yon{\dgA}\iso\ext{\dgG}\comp\dgF,\]
which proves the first part. The last statement then follows taking
$\dgF=\Yon{\dgB}\comp\dgF'$ or $\dgG=\Yon{\dgC}\comp\dgG'$ and using again part \eqref{extresi} of Proposition
\ref{extres}.
\end{proof}

\begin{lem}\label{kerprod}
Given dg categories $\dgA_i$, $\dgB_i$ and objects
$\kke_i\in\dgm{\dgA_i\opp\otimes\dgB_i}$ for $i=1,2$, the diagram
\[\xymatrix{\dgA_1\otimes\dgA_2
\ar[rrr]^-{\dfun{\kke_1}\otimes\dfun{\kke_2}}
\ar[rrrd]_-{\dfun{\kke_1\otimes\kke_2}} & & &
\dgm{\dgB_1}\otimes\dgm{\dgB_2} \ar[d]^{\farg\otimes\farg} \\
 & & & \dgm{\dgB_1\otimes\dgB_2}}\]
(where $\kke_1\otimes\kke_2\in
\dgm{\dgA_1\opp\otimes\dgB_1\otimes\dgA_2\opp\otimes\dgB_2}\iso
\dgm{(\dgA_1\otimes\dgA_2)\opp\otimes(\dgB_1\otimes\dgB_2})$) commutes
in $\dgCat$ up to dg isomorphism. Moreover,
$\dgFM{\kke_1\otimes\kke_2}(\farg)\iso
\kke_1\otimes_{\dgA_1\opp}\farg\otimes_{\dgA_2}\kke_2$.
\end{lem}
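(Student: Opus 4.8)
The plan is to derive both statements by unwinding the definition of the tensor product of dg modules and then invoking only its formal properties: functoriality (Remark \ref{tensor1} (i)), the external product description (Remark \ref{tensor1} (ii)), associativity (Remark \ref{tensor2}) and, where convenient, co-Yoneda in the form of part \eqref{extresi} of Proposition \ref{extres}. Throughout I use the identification $\dgm{\dgC\opp\otimes\dgD}\iso\dgFun(\dgC,\dgm{\dgD})$ coming from \eqref{tensorHom}, so that an object $\kke\in\dgm{\dgC\opp\otimes\dgD}$ is the same as the dg functor $\dfun{\kke}$ sending $C$ to the dg module $\kke(C,\farg)$, together with the canonical dg isomorphisms of dg categories $\dgA_1\opp\otimes\dgB_1\otimes\dgA_2\opp\otimes\dgB_2\iso(\dgA_1\otimes\dgA_2)\opp\otimes(\dgB_1\otimes\dgB_2)$ which re-order and re-associate the tensor factors. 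It is exactly because these rearrangements are isomorphisms and not equalities that the triangle can only be expected to commute up to dg isomorphism.

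For the triangle, I would evaluate the two composites $\dgA_1\otimes\dgA_2\to\dgm{\dgB_1\otimes\dgB_2}$ on an object $(A_1,A_2)$ and compare the resulting dg modules termwise on an object $(B_1,B_2)$ of $\dgB_1\otimes\dgB_2$. Along the diagonal one gets $\dfun{\kke_1\otimes\kke_2}(A_1,A_2)$, whose value at $(B_1,B_2)$ is $(\kke_1\otimes\kke_2)((A_1,A_2),(B_1,B_2))$, and this equals $\kke_1(A_1,B_1)\otimes_\K\kke_2(A_2,B_2)$ by Remark \ref{tensor1} (ii). Along the top one first produces $(\dfun{\kke_1}(A_1),\dfun{\kke_2}(A_2))\in\dgm{\dgB_1}\otimes\dgm{\dgB_2}$ and then applies $\farg\otimes\farg$, which is the external tensor product of dg modules (Remark \ref{tensor1} (ii)); again its value at $(B_1,B_2)$ is $\dfun{\kke_1}(A_1)(B_1)\otimes_\K\dfun{\kke_2}(A_2)(B_2)=\kke_1(A_1,B_1)\otimes_\K\kke_2(A_2,B_2)$. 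The two agree, and since a morphism of $\dgA_1\otimes\dgA_2$ is a sum of elementary tensors $a_1\otimes a_2$, the identical computation on morphisms shows the triangle commutes up to dg isomorphism.

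For the last assertion I would start from the definition $\dgFM{\kke_1\otimes\kke_2}(M)=M\otimes_{\dgA_1\otimes\dgA_2}(\kke_1\otimes\kke_2)$ and, for a fixed $M\in\dgm{\dgA_1\otimes\dgA_2}$, reduce to two elementary identities for the tensor product of dg modules, each proved directly from the cokernel presentation \eqref{eqn:tensor} in the same spirit as associativity (Remark \ref{tensor2}). The first is a Fubini-type isomorphism
\[
M\otimes_{\dgA_1\otimes\dgA_2}(\kke_1\otimes\kke_2)\iso(M\otimes_{\dgA_1}\kke_1)\otimes_{\dgA_2}\kke_2,
\]
obtained by reindexing the double direct sums in \eqref{eqn:tensor} and matching the maps $\Xi$, using that $\otimes_\K$ commutes with direct sums, that $(\dgA_1\otimes\dgA_2)((A_1,A_2),(A_1',A_2'))=\dgA_1(A_1,A_1')\otimes_\K\dgA_2(A_2,A_2')$, and the corresponding factorization of $\kke_1\otimes\kke_2$. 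The second is the commutativity isomorphism $M\otimes_{\dgA_1}\kke_1\iso\kke_1\otimes_{\dgA_1\opp}M$ (up to interchanging the two output tensor factors), the dg-module analogue of $P\otimes_R Q\iso Q\otimes_{R\opp}P$. Chaining these and using associativity yields $\dgFM{\kke_1\otimes\kke_2}(M)\iso\kke_1\otimes_{\dgA_1\opp}M\otimes_{\dgA_2}\kke_2$, naturality in $M$ being automatic since every step is functorial by Remark \ref{tensor1} (i). As a cross-check, one recovers the first assertion from this one by evaluating on $M=\Yon{\dgA_1\otimes\dgA_2}(A_1,A_2)\iso\Yon{\dgA_1}(A_1)\otimes\Yon{\dgA_2}(A_2)$ and applying co-Yoneda, i.e.\ part \eqref{extresi} of Proposition \ref{extres}.

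I expect the only genuine obstacle to be clerical rather than conceptual: carefully tracking the signs introduced by the opposite dg categories in \eqref{eqn:tensor1} and checking that the various re-ordering and re-association isomorphisms used above are mutually coherent. No input is needed beyond the formal properties of $\farg\otimes_{\dgA}\farg$ already recorded in Section \ref{subsec:tensors}.
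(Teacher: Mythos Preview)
Your proposal is correct and follows essentially the same approach as the paper. For the triangle, both you and the paper reduce to the identity $\dfun{\kke_i}(A_i)=\kke_i((A_i,\farg))$ together with Remark~\ref{tensor1}~(ii); for the second assertion, the paper simply declares the isomorphism $M\otimes_{\dgA_1\otimes\dgA_2}(\kke_1\otimes\kke_2)\iso\kke_1\otimes_{\dgA_1\opp}M\otimes_{\dgA_2}\kke_2$ an ``easy exercise using the definition \eqref{eqn:tensor}'', whereas you have helpfully decomposed that exercise into its two constituent steps (the Fubini-type isomorphism and the swap $M\otimes_{\dgA_1}\kke_1\iso\kke_1\otimes_{\dgA_1\opp}M$).
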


\begin{proof}
The commutativity of the diagram follows directly from the fact that $\dfun{\kke_i}(A)=\kke_i((A,\farg))$ and Remark \ref{tensor1} (ii). The second part amounts to showing that, for all $M\in\dgm{\dgA_1\otimes\dgA_2}$, we have the isomorphism $M\otimes_{\dgA_1\otimes\dgA_2}(\kke_1\otimes\kke_2)\iso \kke_1\otimes_{\dgA_1\opp}M\otimes_{\dgA_2}\kke_2$. This is an easy exercise using the definition \eqref{eqn:tensor}.
\end{proof}

\begin{prop}\label{prop:rqr}
Let $\dgF\colon\dgA\to\dgA'$ and $\dgG\colon\dgB\to\dgB'$ be dg
functors with $\dgA$ and $\dgA'$ h-projective.
\begin{enumerate}
\item The dg functor $\dgF$ induces a natural map
$\Iso(H^0(\rqr{{\dgA'}\opp\otimes\dgB}))\to
\Iso(H^0(\rqr{\dgA\opp\otimes\dgB}))$;
if moreover $\dgF$ is a quasi-equivalence, then this map is bijective
and $\Ind{\dgF\opp\otimes\id_{\dgB}}$ restricts to a quasi-equivalence
$\rqr{\dgA\opp\otimes\dgB}\to\rqr{{\dgA'}\opp\otimes\dgB}$.
\item The dg functor $\Ind{\id_{\dgA\opp}\otimes\dgG}$ restricts to a dg
functor $\rqr{\dgA\opp\otimes\dgB}\to\rqr{\dgA\opp\otimes\dgB'}$,
which is a quasi-equivalence if $\dgG$ is such.
\end{enumerate}
\end{prop}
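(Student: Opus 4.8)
The plan is to treat both parts by one scheme. For a dg functor $\dgH$ the induced functor $\Ind{\dgH}$ is, by Lemma \ref{kerprod}, just tensoring with the graph bimodule; this lets one compute how it acts on the $\essim{-}$-valued ``slices'' $\dfun{-}$ of a bimodule, so that one can check that right quasi-representability is preserved, and then promote ``preserves $\rqr{-}$'' to ``restricts to a quasi-equivalence'' using parts \eqref{extresa} and \eqref{extresq} of Proposition \ref{extres}.

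For (1), first note that $\dgF\opp\otimes\id_{\dgB}\colon\dgA\opp\otimes\dgB\to{\dgA'}\opp\otimes\dgB$ is a dg functor, and a quasi-equivalence whenever $\dgF$ is, by Remark \ref{qe} (both $\dgA\opp$ and ${\dgA'}\opp$ are h-projective). The natural map is induced by $\Res{\dgF\opp\otimes\id_{\dgB}}$: for $\kke\in\rqr{{\dgA'}\opp\otimes\dgB}$ one reads off on objects that the slice of $\Res{\dgF\opp\otimes\id_{\dgB}}(\kke)$ at $A\in\dgA$ is $\dfun{\kke}(\dgF(A))\in\essim{\dgB}$, so $\Res{\dgF\opp\otimes\id_{\dgB}}(\kke)$ is right quasi-representable although in general not h-projective; an h-projective resolution of it has h-projective slices by Lemma \ref{hproj} (note $\dgA\opp\otimes\dgB=\dgA\opp\lotimes\dgB$ as $\dgA$ is h-projective), and these are quasi-isomorphic, hence homotopy equivalent, to $\dfun{\kke}(\dgF(-))\in\essim{\dgB}$, so the resolution lies in $\rqr{\dgA\opp\otimes\dgB}$; uniqueness of such resolutions up to homotopy equivalence makes this well defined on $\Iso(H^0(-))$. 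When $\dgF$ is a quasi-equivalence, part \eqref{extresq} of Proposition \ref{extres} gives that $\Ind{\dgF\opp\otimes\id_{\dgB}}\colon\hproj{\dgA\opp\otimes\dgB}\to\hproj{{\dgA'}\opp\otimes\dgB}$ is a quasi-equivalence. Applying Lemma \ref{kerprod} with $\kke_2=\dgid{\dgB}$ and $\kke_1$ the bimodule of $\Yon{{\dgA'}\opp}\comp\dgF\opp$ (using $-\otimes_\dgB\dgid{\dgB}\iso\id$ and part \eqref{extresi} of Proposition \ref{extres}), one identifies, for $\kke\in\rqr{\dgA\opp\otimes\dgB}$ and $A'\in\dgA'$, the slice $\dfun{\Ind{\dgF\opp\otimes\id_{\dgB}}(\kke)}(A')\iso\ext{\dfun{\kke}}(\Res{\dgF}(\Yon{\dgA'}(A')))$. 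This slice is h-projective by Lemma \ref{hproj} ($\dgA'$ is h-projective); since $H^0(\dgF)$ is essentially surjective, choosing $A_0\in\dgA$ with $\dgF(A_0)$ homotopy equivalent to $A'$ and pushing the resulting homotopy equivalence together with the quasi-isomorphism $\Yon{\dgA}(A_0)\to\Res{\dgF}(\Yon{\dgA'}(\dgF(A_0)))$ through $\ext{\dfun{\kke}}$ (again part \eqref{extresi} of Proposition \ref{extres}) shows the slice is homotopy equivalent to $\dfun{\kke}(A_0)\in\essim{\dgB}$. Hence $\Ind{\dgF\opp\otimes\id_{\dgB}}$ restricts to $\rqr{\dgA\opp\otimes\dgB}\to\rqr{{\dgA'}\opp\otimes\dgB}$; the restriction is fully faithful up to quasi-isomorphism, and essentially surjective on $H^0$ because for $\kke'\in\rqr{{\dgA'}\opp\otimes\dgB}$ one may take $\kke$ to be an h-projective resolution of $\Res{\dgF\opp\otimes\id_{\dgB}}(\kke')$ (which lies in $\rqr{\dgA\opp\otimes\dgB}$ by the first paragraph) and use that, $\Ind{\dgF\opp\otimes\id_{\dgB}}$ being a quasi-equivalence on h-projectives with $\Ind{\dgF\opp\otimes\id_{\dgB}}\dashv\Res{\dgF\opp\otimes\id_{\dgB}}$ (part \eqref{extresa} of Proposition \ref{extres}), the counit gives $\Ind{\dgF\opp\otimes\id_{\dgB}}(\kke)\htp\kke'$. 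The induced bijection on $\Iso(H^0(-))$ together with the unit of the same adjunction (a quasi-isomorphism on h-projectives) identifies the natural map above as its inverse, hence bijective.

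For (2), the same machine applies with the target enlarged instead: $\id_{\dgA\opp}\otimes\dgG\colon\dgA\opp\otimes\dgB\to\dgA\opp\otimes\dgB'$ is a quasi-equivalence when $\dgG$ is (Remark \ref{qe}, $\dgA\opp$ h-projective), and Lemma \ref{kerprod} with $\kke_1=\dgid{\dgA\opp}$ and $\kke_2$ the bimodule of $\Yon{\dgB'}\comp\dgG$ (using $\dgid{\dgA\opp}\otimes_\dgA-\iso\id$) gives, for $M\in\rqr{\dgA\opp\otimes\dgB}$ and $A\in\dgA$, the slice identification $\dfun{\Ind{\id_{\dgA\opp}\otimes\dgG}(M)}(A)\iso\Ind{\dgG}(\dfun{M}(A))$. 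This case is smoother than (1) because $\dfun{M}(A)\in\essim{\dgB}$ is already h-projective: $\Ind{\dgG}$ preserves h-projectivity (part \eqref{extresh} of Proposition \ref{extres}) and homotopy equivalences, so $\dfun{M}(A)\htp\Yon{\dgB}(B_A)$ gives $\dfun{\Ind{\id_{\dgA\opp}\otimes\dgG}(M)}(A)\htp\Ind{\dgG}(\Yon{\dgB}(B_A))\iso\Yon{\dgB'}(\dgG(B_A))\in\essim{\dgB'}$ by part \eqref{extresi} of Proposition \ref{extres}; with $\Ind{\id_{\dgA\opp}\otimes\dgG}(M)\in\hproj{\dgA\opp\otimes\dgB'}$ this shows the restriction to $\rqr{-}$ is well defined. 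If $\dgG$ is a quasi-equivalence then $\Ind{\id_{\dgA\opp}\otimes\dgG}$ is a quasi-equivalence on h-projectives by part \eqref{extresq} of Proposition \ref{extres}, and essential surjectivity onto $\rqr{\dgA\opp\otimes\dgB'}$ follows as in (1); alternatively, any preimage $M$ of an object of $\rqr{\dgA\opp\otimes\dgB'}$ has $\Ind{\dgG}(\dfun{M}(A))\in\essim{\dgB'}$ for all $A$, and since $\Ind{\dgG}\colon\hproj{\dgB}\to\hproj{\dgB'}$ is a quasi-equivalence (hence an equivalence on $H^0$ taking representables to representables, part \eqref{extresi} of Proposition \ref{extres}) this forces $\dfun{M}(A)\in\essim{\dgB}$, i.e.\ $M\in\rqr{\dgA\opp\otimes\dgB}$.

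The hard part, in both statements, is the slice computation in the induction direction: extracting, via Lemmas \ref{kerprod} and \ref{extcomp}, the module $\dfun{\Ind{-}(\kke)}(-)$ and then carefully tracking h-projectivity — this is where the hypothesis that $\dgA$ and $\dgA'$ are h-projective enters, through Lemma \ref{hproj}, which guarantees that slices of h-projective bimodules are h-projective — so that the quasi-isomorphisms produced by ``$\dgF$ (resp.\ $\dgG$) is a quasi-equivalence'' can legitimately be upgraded to homotopy equivalences landing inside $\essim{-}$. Everything else is formal manipulation with the adjunctions of Proposition \ref{extres} and the facts about $\Hqe$ already established.
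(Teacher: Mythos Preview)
Your construction of the natural map in (1) via $\Res_{\dgF\opp\otimes\id_\dgB}$ followed by h-projective resolution, and your entire treatment of (2), match the paper's proof essentially verbatim; in particular the slice identification $\dfun{\Ind_{\id_{\dgA\opp}\otimes\dgG}(\kke)}(A)\iso\Ind_\dgG(\dfun{\kke}(A))$ via Lemma \ref{kerprod} is exactly what the paper does.

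The gap is in your argument that $\Ind_{\dgF\opp\otimes\id_\dgB}$ sends $\rqr{\dgA\opp\otimes\dgB}$ into $\rqr{{\dgA'}\opp\otimes\dgB}$ when $\dgF$ is a quasi-equivalence. You compute the slice as $\ext{\dfun{\kke}}(\Res_\dgF(\Yon_{\dgA'}(A')))$ and then ``push the quasi-isomorphism $\Yon_\dgA(A_0)\to\Res_\dgF(\Yon_{\dgA'}(\dgF(A_0)))$ through $\ext{\dfun{\kke}}$''. But $\ext{\dfun{\kke}}=-\otimes_\dgA\kke$ is not known to preserve quasi-isomorphisms here: that would require $\kke$ to be h-flat as a one-sided $\dgA\opp$-module, which a dual of Lemma \ref{hproj} would give only if $\dgB$ were h-projective --- a hypothesis you do not have. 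Nor is $\Res_\dgF(\Yon_{\dgA'}(\dgF(A_0)))$ h-projective in $\dgm{\dgA}$ in general, so you cannot upgrade the quasi-isomorphism to a homotopy equivalence before applying $\ext{\dfun{\kke}}$. The reference to part \eqref{extresi} of Proposition \ref{extres} does not address this.

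The paper sidesteps this entirely by never computing the $\Ind$-slices directly. It stays on the $\Res$-side: given $\kke'\in\hproj{{\dgA'}\opp\otimes\dgB}$ and $\kke\to\Res_{\dgF\opp\otimes\id_\dgB}(\kke')$ an h-projective resolution, one has for each $A\in\dgA$ a quasi-isomorphism $\dfun{\kke}(A)\to\dfun{\kke'}(\dgF(A))$ between objects that are \emph{both} in $\hproj{\dgB}$ by Lemma \ref{hproj} (this uses h-projectivity of $\dgA$ and $\dgA'$ respectively), hence a homotopy equivalence. The single relation $\dfun{\kke}(A)\htp\dfun{\kke'}(\dgF(A))$ then gives both directions at once: $\kke'\in\rqr{-}\Rightarrow\kke\in\rqr{-}$ trivially, and the converse when $H^0(\dgF)$ is essentially surjective. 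Since $\Ind$ and $\Res+\text{resolve}$ are inverse on $\Iso(H^0(\hproj{-}))$, this yields that $\Ind$ restricts to $\rqr{-}$ without any tensor-exactness issue.
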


\begin{proof}
As for (1), notice that, setting
$\dgF_1:=\dgF\opp\otimes\id_{\dgB}$, the dg functor
$\Ind{\dgF_1}\colon\hproj{\dgA\opp\otimes\dgB}\to
\hproj{{\dgA'}\opp\otimes\dgB}$ clearly induces a natural map
$\Iso(H^0(\hproj{\dgA\opp\otimes\dgB}))\to
\Iso(H^0(\hproj{{\dgA'}\opp\otimes\dgB}))$.
On the other hand, one can also define a natural map
$\Iso(H^0(\hproj{{\dgA'}\opp\otimes\dgB}))\to
\Iso(H^0(\hproj{\dgA\opp\otimes\dgB}))$
by $\ho{\kke'}\mapsto\ho{\kke}$, where $\kke$ is an h-projective
resolution of $\Res{\dgF_1}(\kke')$. It is not difficult to show that, if
$\dgF$ (hence $\Ind{\dgF_1}$, by Remark \ref{qe} and part
\eqref{extresq} of Proposition \ref{extres}) is a quasi-equivalence,
then these two maps are bijective and inverse to each other (see, for
example, \cite[Sect.\ 14.12]{Dr}). Therefore it is enough to prove that
$\kke'\in\rqr{{\dgA'}\opp\otimes\dgB}$ implies
$\kke\in\rqr{\dgA\opp\otimes\dgB}$, and that the converse is true if
$\dgF$ is a quasi-equivalence. To see this, observe that the
quasi-isomorphism $\kke\to\Res{\dgF_1}(\kke')$ induces, for every
$A\in\dgA$, a quasi-isomorphism
$\dfun{\kke}(A)\to\dfun{\Res{\dgF_1}(\kke')}(A)=\dfun{\kke'}(\dgF(A))$,
which is in fact a homotopy equivalence (due to the fact that both the
source and the target are in $\hproj{\dgB}$ by Lemma \ref{hproj}). It
follows that $\dfun{\kke}(A)\in\essim{\dgB}$ if and only if
$\dfun{\kke'}(\dgF(A))\in\essim{\dgB}$, which is enough to conclude.
Indeed, by the definition of $\dfun{\kke}$, we have that $\kke\in\rqr{{\dgA}\opp\otimes\dgB}$ if and only if $\dfun{\kke}(A)\in\essim{\dgB}$, for all $A\in\dgA$. On the other hand, it is clear that $\dfun{\kke}(A)\in\essim{\dgB}$ because $\kke'\in\rqr{{\dgA'}\opp\otimes\dgB}$ and thus $\dfun{\kke'}(\dgF(A))\in\essim{\dgB}$, for all $A\in\dgA$.
Clearly, if $\dgF$ is a quasi-equivalence, the same argument shows that  $\kke'\in\rqr{{\dgA'}\opp\otimes\dgB}$ if  $\kke\in\rqr{{\dgA}\opp\otimes\dgB}$.

As for (2), we just need to show that, setting
$\dgG_1:=\id_{\dgA\opp}\otimes\dgG$, the dg functor
$\Ind{\dgG_1}\colon\hproj{\dgA\opp\otimes\dgB}\to
\hproj{\dgA\opp\otimes\dgB'}$ sends $\rqr{\dgA\opp\otimes\dgB}$ to
$\rqr{\dgA\opp\otimes\dgB'}$ (because
then the second part of the statement can be proved with an argument
which is completely similar to the one used in (1)). Given
$\kke\in\hproj{\dgA\opp\otimes\dgB}$ and setting
$\kke':=\Ind{\dgG_1}(\kke)\in\hproj{\dgA\opp\otimes\dgB'}$, we claim
that $\dfun{\kke'}\iso\Ind{\dgG}\comp\dfun{\kke}$. Notice that this is
enough to conclude that $\kke'\in\rqr{\dgA\opp\otimes\dgB'}$ if
$\kke\in\rqr{\dgA\opp\otimes\dgB}$, as clearly
$\Ind{\dgG}(\essim{\dgB})\subseteq\essim{\dgB'}$. Now, denoting by
$\kkg_1\in\dgm{(\dgA\opp\otimes\dgB)\opp\otimes(\dgA\opp\otimes\dgB')}
\iso\dgm{\dgA\otimes\dgA\opp\otimes\dgB\opp\otimes\dgB'}$ the dg
module such that
$\dfun{\kkg_1}=\Yon{\dgA\opp\otimes\dgB'}\comp\dgG_1$, it is easy to
see that $\kkg_1\iso\dgid{\dgA\opp}\otimes\kkg$, where
$\kkg\in\dgm{\dgB\opp\otimes\dgB'}$ denotes the dg module such that
$\dfun{\kkg}=\Yon{\dgB'}\comp\dgG$. As $\Ind{\dgG_1}=\dgFM{\kkg_1}$,
by Lemma \ref{kerprod} we get
\[
\kke'=\dgFM{\kkg_1}(\kke)\iso\dgFM{\dgid{\dgA\opp}\otimes\kkg}(\kke)
\iso\dgid{\dgA\opp}\otimes_{\dgA}\kke\otimes_{\dgB}\kkg\iso
\kke\otimes_{\dgB}\kkg,
\]
where the last isomorphism is due to \cite[Sect.\ 14.6]{Dr}. It follows from the associativity of the tensor product that
$\dgFM{\kke'}\iso\dgFM{\kkg}\comp\dgFM{\kke}=
\Ind{\dgG}\comp\dgFM{\kke}$, which implies that
$\dfun{\kke'}\iso\Ind{\dgG}\comp\dfun{\kke}$ by part \eqref{extresi}
of Proposition \ref{extres}.
\end{proof}

\subsection{Extending morphisms in $\Hqe$}\label{subsec:morita}

Let $\dgA$ and $\dgB$ be dg categories.

\begin{lem}\label{lem:extension}
If $\dgF_1,\dgF_2\colon\dgA\to\hproj{\dgB}$ are dg functors such that
$\hq{\dgF_1}=\hq{\dgF_2}$, then $\hq{\ext{\dgF_1}}=\hq{\ext{\dgF_2}}$
in $\hqe{\hproj{\dgA},\hproj{\dgB}}$.
\end{lem}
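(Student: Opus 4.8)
The plan is to reduce first to the case where $\dgF_1$ and $\dgF_2$ are standard homotopic, then to replace the extensions $\ext{\dgF_i}$ by the induction functors $\Ind{\dgF_i}\colon\hproj{\dgA}\to\hproj{\hproj{\dgB}}$, for which the path object $\pob{\hproj{\dgB}}$ makes the homotopy argument essentially formal, and finally to transport the resulting equality back to $\hqe{\hproj{\dgA},\hproj{\dgB}}$ by post-composing with a restriction functor. For the reduction: by Proposition \ref{Hqemor}\eqref{Hqehtp} the hypothesis $\hq{\dgF_1}=\hq{\dgF_2}$ yields a quasi-equivalence $\dgI\colon\dgA'\to\dgA$ such that $\dgF_1\comp\dgI$ and $\dgF_2\comp\dgI$ are standard homotopic. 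By Lemma \ref{extcomp} there are dg isomorphisms $\ext{\dgF_i\comp\dgI}\iso\ext{\dgF_i}\comp\Ind{\dgI}$, and $\Ind{\dgI}$ is a quasi-equivalence by Proposition \ref{extres}\eqref{extresq}; since a dg isomorphism is in particular a termwise homotopy equivalence, Corollary \ref{rephtp} gives $\hq{\ext{\dgF_i}}\comp\hq{\Ind{\dgI}}=\hq{\ext{\dgF_i\comp\dgI}}$ with $\hq{\Ind{\dgI}}$ invertible. Thus it suffices to prove the statement for $\dgF_i\comp\dgI$, so from now on we assume $\dgF_1$ and $\dgF_2$ are standard homotopic.

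The heart of the argument is the following identification. Consider the dg functor $\Res{\Yon{\dgB}}\colon\dgm{\hproj{\dgB}}\to\dgm{\dgB}$, $M\mapsto M(\Yon{\dgB}(\farg))$. By the dg Yoneda lemma $\Res{\Yon{\dgB}}(\Yon{\hproj{\dgB}}(N))\iso N$ for all $N\in\hproj{\dgB}$; since $\Res{\Yon{\dgB}}$ preserves homotopy equivalences and $\hproj{\dgB}$ is closed under them, Proposition \ref{extres}\eqref{extresr} shows that $\Res{\Yon{\dgB}}$ restricts to a dg functor $\hproj{\hproj{\dgB}}\to\hproj{\dgB}$. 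I claim that for every dg functor $\dgF\colon\dgA\to\hproj{\dgB}$ one has a dg isomorphism $\ext{\dgF}\iso\Res{\Yon{\dgB}}\comp\Ind{\dgF}$ of dg functors $\hproj{\dgA}\to\hproj{\dgB}$ (both sides making sense by Proposition \ref{extres}\eqref{extresh}, as $\dgF(\dgA)\subseteq\hproj{\dgB}$ and $\Yon{\hproj{\dgB}}\comp\dgF$ lands in $\hproj{\hproj{\dgB}}$). Indeed, $\Ind{\dgF}$ is left adjoint to $\Res{\dgF}$ (Proposition \ref{extres}\eqref{extresa}), and $\Res{\Yon{\dgB}}$, being a restriction functor, has a right adjoint $\fun{R}$ (see \cite[Sect.\ 1]{LO}); hence $\Res{\Yon{\dgB}}\comp\Ind{\dgF}$ is left adjoint to $\Res{\dgF}\comp\fun{R}$. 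Using the dg Yoneda lemma and the isomorphism $\Res{\Yon{\dgB}}\comp\Yon{\hproj{\dgB}}\iso\id_{\hproj{\dgB}}$ just noted, one computes $\fun{R}(N)(M)\iso\dgm{\dgB}(M,N)$ for $M\in\hproj{\dgB}$, whence $(\Res{\dgF}\comp\fun{R})(N)(A)\iso\dgm{\dgB}(\dgF(A),N)=\res{\dgF}(N)(A)$, i.e.\ $\Res{\dgF}\comp\fun{R}\iso\res{\dgF}$. Since $\ext{\dgF}$ is also left adjoint to $\res{\dgF}$ (Proposition \ref{extres}\eqref{extresa}), uniqueness of adjoints gives the claimed isomorphism.

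To conclude, let $\dgH\colon\dgA\to\pob{\hproj{\dgB}}$ realize a standard homotopy, so that $\src{\hproj{\dgB}}\comp\dgH=\dgF_1$ and $\tar{\hproj{\dgB}}\comp\dgH=\dgF_2$. Since $\inc{\hproj{\dgB}}$ is a quasi-equivalence, so is $\Ind{\inc{\hproj{\dgB}}}$ (Proposition \ref{extres}\eqref{extresq}), hence $\hq{\Ind{\inc{\hproj{\dgB}}}}$ is invertible. From $\src{\hproj{\dgB}}\comp\inc{\hproj{\dgB}}=\tar{\hproj{\dgB}}\comp\inc{\hproj{\dgB}}$ and the functoriality of $\Ind{\farg}$ up to dg isomorphism (a formal consequence of Lemma \ref{extcomp}) we get $\Ind{\src{\hproj{\dgB}}}\comp\Ind{\inc{\hproj{\dgB}}}\iso\Ind{\tar{\hproj{\dgB}}}\comp\Ind{\inc{\hproj{\dgB}}}$, so by Corollary \ref{rephtp} and cancellation of $\hq{\Ind{\inc{\hproj{\dgB}}}}$ we obtain $\hq{\Ind{\src{\hproj{\dgB}}}}=\hq{\Ind{\tar{\hproj{\dgB}}}}$. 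As $\Ind{\dgF_1}\iso\Ind{\src{\hproj{\dgB}}}\comp\Ind{\dgH}$ and $\Ind{\dgF_2}\iso\Ind{\tar{\hproj{\dgB}}}\comp\Ind{\dgH}$, this forces $\hq{\Ind{\dgF_1}}=\hq{\Ind{\dgF_2}}$ in $\hqe{\hproj{\dgA},\hproj{\hproj{\dgB}}}$. Composing with $\hq{\Res{\Yon{\dgB}}}$ and using the identification of the previous paragraph together with Corollary \ref{rephtp},
\[
\hq{\ext{\dgF_1}}=\hq{\Res{\Yon{\dgB}}}\comp\hq{\Ind{\dgF_1}}=\hq{\Res{\Yon{\dgB}}}\comp\hq{\Ind{\dgF_2}}=\hq{\ext{\dgF_2}},
\]
as desired.

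The main obstacle is the identification $\ext{\dgF}\iso\Res{\Yon{\dgB}}\comp\Ind{\dgF}$: it is exactly what lets one trade the extension $\ext{\dgF}$, whose target $\hproj{\dgB}$ is not of the form $\dgm{\dgC}$ (so that the path-object technology does not apply directly), for the induction $\Ind{\dgF}$, whose target $\hproj{\hproj{\dgB}}$ carries the path object $\pob{\hproj{\dgB}}$ and hence makes the homotopy argument formal. Everything else is bookkeeping with the adjunctions and composition rules recorded in Proposition \ref{extres} and Lemma \ref{extcomp}.
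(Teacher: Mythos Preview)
Your proof is correct. The reduction to the standard homotopic case and the path-object trick (cancelling $\Ind{\inc{\hproj{\dgB}}}$ to identify the images of $\src{\hproj{\dgB}}$ and $\tar{\hproj{\dgB}}$ in $\Hqe$) are exactly as in the paper. Where you diverge is in the packaging: the paper applies Lemma~\ref{extcomp} directly to write $\ext{\dgF_i\comp\dgI}\iso\ext{\src{\hproj{\dgB}}}\comp\Ind{\dgH}$ (respectively with $\tar{\hproj{\dgB}}$), keeping the target $\hproj{\dgB}$ throughout, so no auxiliary identification is needed. You instead pass through $\hproj{\hproj{\dgB}}$, first establishing $\ext{\dgF}\iso\Res{\Yon{\dgB}}\comp\Ind{\dgF}$ via uniqueness of adjoints, then running the whole argument with induction functors before projecting back along $\Res{\Yon{\dgB}}$. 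The two routes are in fact intertwined: applying your identification with $\dgF=\src{\hproj{\dgB}}$ gives $\ext{\src{\hproj{\dgB}}}\iso\Res{\Yon{\dgB}}\comp\Ind{\src{\hproj{\dgB}}}$, so your factorisation is a further decomposition of the paper's. The paper's route is slightly more economical (no detour through $\hproj{\hproj{\dgB}}$, no adjoint-uniqueness step), while yours has the appeal of reducing everything to the purely formal functoriality of $\Ind{\farg}$, at the cost of one extra lemma.
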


\begin{proof}
As $\hq{\dgF_1}=\hq{\dgF_2}$, by part
\eqref{Hqehtp} of Proposition \ref{Hqemor}
there exists a quasi-equivalence $\dgI:\dgC\to\dgA$ such that
$\dgG_i:=\dgF_i\comp\dgI$ for $i=1,2$
sit in the commutative diagram
\begin{equation*}\label{eqn:eqaldiagr}
\xymatrix{
&&\dgC\ar[d]^-{\dgH}\ar[dll]_-{\dgG_1}\ar[drr]^-{\dgG_2}&&\\
\hproj{\dgB}&&\pob{\hproj{\dgB}}\ar[ll]^-{\src{\hproj{\dgB}}}\ar[rr]_-{\tar{\hproj{\dgB}}} &&
\hproj{\dgB}
}
\end{equation*}
for some dg functor $\dgH\colon\dgC\to\pob{\hproj{\dgB}}$.
Thus, by Lemma \ref{extcomp}, we have
\begin{equation}\label{eqn:eqi1}
\ext{\dgG_1}=\ext{\src{\hproj{\dgB}}\comp\dgH}\iso\ext{\src{\hproj{\dgB}}}\comp\Ind{\dgH}\qquad\ext{\dgG_2}=\ext{\tar{\hproj{\dgB}}\comp\dgH}\iso\ext{\tar{\hproj{\dgB}}}\comp\Ind{\dgH}.
\end{equation}
Now observe that, by definition, $\src{\hproj{\dgA}}\comp\inc{\hproj{\dgA}}=\id_{\hproj{\dgA}}=\tar{\hproj{\dgA}}\comp\inc{\hproj{\dgA}}$. Hence
\[
\ext{\src{\hproj{\dgA}}\comp\inc{\hproj{\dgA}}}\iso\ext{\src{\hproj{\dgA}}}\comp\Ind{\inc{\hproj{\dgA}}}\iso\ext{\tar{\hproj{\dgA}}\comp\inc{\hproj{\dgA}}}\iso\ext{\tar{\hproj{\dgA}}}\comp\Ind{\inc{\hproj{\dgA}}}
\]
where the first and the last isomorphisms are again due to Lemma
\ref{extcomp}. Then
$\hq{\ext{\src{\hproj{\dgA}}}}\comp\hq{\Ind{\inc{\hproj{\dgA}}}}=\hq{\ext{\tar{\hproj{\dgA}}}}\comp\hq{\Ind{\inc{\hproj{\dgA}}}}$. Since
$\inc{\hproj{\dgA}}$ (and thus, by part \eqref{extresq} of Proposition \ref{extres}, $\Ind{\inc{\hproj{\dgA}}}$) is a quasi-equivalence, we get $\hq{\ext{\src{\hproj{\dgA}}}}=\hq{\ext{\tar{\hproj{\dgA}}}}$.

Using this and \eqref{eqn:eqi1}, we obtain
\[
\hq{\ext{\dgG_1}}=\hq{\ext{\src{\hproj{\dgB}}}}\comp\hq{\Ind{\dgH}}=\hq{\ext{\tar{\hproj{\dgB}}}}\comp\hq{\Ind{\dgH}}=\hq{\ext{\dgG_2}}.
\]
Again by Lemma \ref{extcomp}, we have $\hq{\ext{\dgG_i}}=\hq{\ext{\dgF_i\comp\dgI}}=\hq{\ext{\dgF_i}\comp\Ind{\dgI}}=\hq{\ext{\dgF_i}}\comp\hq{\Ind{\dgI}}$,
for $i=1,2$. As $\Ind{\dgI}$ is a quasi-equivalence by part \eqref{extresq} of Proposition \ref{extres}, the identity
$\hq{\ext{\dgG_1}}=\hq{\ext{\dgG_2}}$ implies $\hq{\ext{\dgF_1}}=\hq{\ext{\dgF_2}}$.
\end{proof}

\begin{prop}\label{prop:Morita}
If $\dgA$ and $\dgB$ are dg categories, the natural map of sets
\begin{equation*}\label{eqn:Morita}
\hqe{\hproj{\dgA},\hproj{\dgB}}_c\tto\hqe{\dgA,\hproj{\dgB}}\qquad
f\mapsto f\comp\hq{\Yon{\dgA}}
\end{equation*}
(where $\Yon{\dgA}$ denotes the Yoneda embedding
$\dgA\to\hproj{\dgA}$) is surjective.\footnote{The map is also inective, as we will see in Corollary \ref{cor:bij}. The proof of injectivity in the published version of the paper is not correct because it uses the same wrong argument used in the proof of \cite[Proposition 1.17]{LO} to show that $H^0(\Phi)$ commutes with direct sums. It is important to observe that, as the same argument actually proves that $H^0(\phi)$ commutes with direct sums, \cite[Proposition 1.17]{LO} is still true.}
\end{prop}

\begin{proof}
Given $f\in\hqe{\dgA,\hproj{\dgB}}$, by part \eqref{Hqefrac} of
Proposition \ref{Hqemor} there exist a quasi-equivalence
$\dgI\colon\dgC\to\dgA$ and a dg functor
$\dgF\colon\dgC\to\hproj{\dgB}$ such that
$f=\hq{\dgF}\comp\hq{\dgI}^{-1}$. As $\Ind{\dgI}$ is a
quasi-equivalence by part \eqref{extresq} of Proposition \ref{extres},
we can define $\ext{f}:=\hq{\ext{\dgF}}\comp\hq{\Ind{\dgI}}^{-1}\in
\hqe{\hproj{\dgA},\hproj{\dgB}}$. By
part \eqref{extresi} of Proposition \ref{extres} we see that
$\ext{f}\in\hqe{\hproj{\dgA},\hproj{\dgB}}_c$ and
$\ext{f}\comp\hq{\Yon{\dgA}}=f$, thereby proving that the map is
surjective.
\end{proof}

On the other hand, given three dg categories $\dgA$, $\dgB$ and $\dgC$ with a fully
faithful dg functor $\dgJ\colon\dgB\to\dgC$, we have another natural map of sets
\begin{equation*}\label{eqn:Morita2}
\monomap[\dgA,\dgJ]\colon\hqe{\dgA,\dgB}\to\hqe{\dgA,\dgC}\qquad f\mapsto\hq{\dgJ}\comp f.
\end{equation*}

\begin{prop}\label{prop:Morita2}
The natural map of sets $\monomap[\dgA,\dgJ]$ is injective.
\end{prop}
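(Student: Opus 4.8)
The plan is to push everything down to a statement about honest dg functors using the roof calculus of Proposition~\ref{Hqemor}, and then to exploit full faithfulness of $\dgJ$ to lift a standard homotopy valued in the path object $\pob{\dgC}$ to one valued in $\pob{\dgB}$. So let $f_1,f_2\in\hqe{\dgA,\dgB}$ satisfy $\hq{\dgJ}\comp f_1=\hq{\dgJ}\comp f_2$. By part~\eqref{Hqefrac} of Proposition~\ref{Hqemor}, applied to the pair $f_1,f_2$, there are a quasi-equivalence $\dgI\colon\dgA'\to\dgA$ and dg functors $\dgF_1,\dgF_2\colon\dgA'\to\dgB$ with $f_i=\hq{\dgF_i}\comp\hq{\dgI}^{-1}$; since $\hq{\dgI}$ is invertible in $\Hqe$, it suffices to prove $\hq{\dgF_1}=\hq{\dgF_2}$. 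The hypothesis gives $\hq{\dgJ\comp\dgF_1}=\hq{\dgJ\comp\dgF_2}$, so by part~\eqref{Hqehtp} of Proposition~\ref{Hqemor} there are a quasi-equivalence $\dgI'\colon\dgA''\to\dgA'$ and a dg functor $\dgH\colon\dgA''\to\pob{\dgC}$ with $\src{\dgC}\comp\dgH=\dgJ\comp\dgG_1$ and $\tar{\dgC}\comp\dgH=\dgJ\comp\dgG_2$, where $\dgG_i:=\dgF_i\comp\dgI'\colon\dgA''\to\dgB$ for $i=1,2$.

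The core of the argument is to construct a dg functor $\dgH_0\colon\dgA''\to\pob{\dgB}$ with $\src{\dgB}\comp\dgH_0=\dgG_1$ and $\tar{\dgB}\comp\dgH_0=\dgG_2$, and this is where full faithfulness of $\dgJ$ enters. For an object $A$ of $\dgA''$, the conditions on $\src{\dgC}\comp\dgH$ and $\tar{\dgC}\comp\dgH$ force the triple $\dgH(A)\in\pob{\dgC}$ to have first component $\dgJ\dgG_1(A)$ and second component $\dgJ\dgG_2(A)$, and its third component is a homotopy equivalence in $Z^0(\dgC(\dgJ\dgG_1(A),\dgJ\dgG_2(A)))$. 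Since $\dgJ$ identifies the complex $\dgB(\dgG_1(A),\dgG_2(A))$ with $\dgC(\dgJ\dgG_1(A),\dgJ\dgG_2(A))$, this third component has a unique preimage $g_A$, necessarily closed of degree $0$; and $g_A$ is a homotopy equivalence because $H^0(\dgJ)$ is fully faithful (as $\dgJ$ is), so $(\dgG_1(A),\dgG_2(A),g_A)$ is a genuine object of $\pob{\dgB}$, not merely of $\dgMor{\dgB}$. Set $\dgH_0(A):=(\dgG_1(A),\dgG_2(A),g_A)$. For a morphism $e$ of $\dgA''$, the first two components of $\dgH(e)$ are likewise forced to be $\dgJ\dgG_1(e)$ and $\dgJ\dgG_2(e)$, while the third lies in a graded Hom-module of $\dgC$ which $\dgJ$ again identifies with the corresponding one of $\dgB$; pulling it back along $\dgJ$ defines $\dgH_0(e)$. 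That $\dgH_0$ respects differentials and compositions, hence is a dg functor, is a routine verification, since every identity to be checked is obtained from the corresponding one for $\dgH$ by applying the isomorphisms $\dgJ$ and using the explicit formulas for the differential and composition in $\dgMor{\dgB}$ and $\dgMor{\dgC}$. By construction $\src{\dgB}\comp\dgH_0=\dgG_1$ and $\tar{\dgB}\comp\dgH_0=\dgG_2$.

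Thus $\dgH_0$ is a standard homotopy between $\dgG_1=\dgF_1\comp\dgI'$ and $\dgG_2=\dgF_2\comp\dgI'$, so part~\eqref{Hqehtp} of Proposition~\ref{Hqemor} (with the quasi-equivalence there taken to be the identity) yields $\hq{\dgF_1\comp\dgI'}=\hq{\dgF_2\comp\dgI'}$, i.e.\ $\hq{\dgF_1}\comp\hq{\dgI'}=\hq{\dgF_2}\comp\hq{\dgI'}$; cancelling the isomorphism $\hq{\dgI'}$ gives $\hq{\dgF_1}=\hq{\dgF_2}$, and hence $f_1=f_2$. The only non-formal step, and the one I expect to be the main obstacle, is the lift in the second paragraph: full faithfulness of $\dgJ$ has to be used twice — once to pull back, on objects, the degree-$0$ third components of $\dgH$ (here one also needs $H^0(\dgJ)$ fully faithful, so that the pullback is still a homotopy equivalence and the lift really lands in $\pob{\dgB}$ rather than just $\dgMor{\dgB}$), and once to pull back the ``homotopy'' (third) component of each morphism $\dgH(e)$. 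Everything else is bookkeeping with the roof calculus of Proposition~\ref{Hqemor}.
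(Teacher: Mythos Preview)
Your proof is correct and takes essentially the same route as the paper's: reduce to honest dg functors via the roof calculus, obtain a standard homotopy $\dgH$ in $\pob{\dgC}$, and lift it using full faithfulness of $\dgJ$. The paper does the lift in two steps---first factoring $\dgH$ through $\pob{\dgC'}$ for $\dgC'\subseteq\dgC$ the essential image of $H^0(\dgJ)$, then cancelling the resulting quasi-equivalence $\dgJ_1\colon\dgB\to\dgC'$---whereas you lift directly to $\pob{\dgB}$; this is a minor streamlining, not a different argument.
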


\begin{proof}
Set $\dgC'$ to be the full dg subcategory of $\dgC$ consisting
of all objects in the essential image of $H^0(\dgJ)$ and denote by $\dgJ_1\colon\dgB\to\dgC'$
the natural quasi-equivalence. Let $\dgJ_2\colon\dgC'\to\dgC$ be the
natural inclusion inducing a natural dg functor
$\dgJ_3\colon\pob{\dgC'}\to\pob{\dgC}$ such that
$\dgJ_3((C_1,C_2,f))=(\dgJ_2(C_1),\dgJ_2(C_2),\dgJ_2(f))$ for every
$(C_1,C_2,f)\in\pob{\dgC'}$. It is easy to verify that $\dgJ_3$ is
fully faithful since $\dgJ_2$ is.

Given $f_1,f_2\in\hqe{\dgA,\dgB}$ such that
$\monomap[\dgA,\dgJ](f_1)=\monomap[\dgA,\dgJ](f_2)$, by part
\eqref{Hqefrac} of Proposition \ref{Hqemor} there exist a
quasi-equivalence $\dgI\colon\dgD\to\dgA$ and dg functors
$\dgF_i\colon\dgD\to\dgB$ such that
$f_i=\hq{\dgF_i}\comp\hq{\dgI}^{-1}$, for $i=1,2$. As
$\hq{\dgJ\comp\dgF_1}=\hq{\dgJ\comp\dgF_2}$, by part \eqref{Hqehtp} of
Proposition \ref{Hqemor} there exist
a quasi-equivalence $\dgI':\dgD'\to\dgD$ and a dg functor
$\dgH\colon\dgD'\to\pob{\dgC}$ such that, setting
$\dgG_i:=\dgJ\comp\dgF_i\comp\dgI'\colon\dgD'\to\dgC$ for $i=1,2$,
$\dgG_1=\src{\dgC}\comp\dgH$ and $\dgG_2=\tar{\dgC}\comp\dgH$. Observe
that, by definition, $\dgH(D)=(\dgG_1(D),\dgG_2(D),f)$, where
$f\colon\dgG_1(D)\to\dgG_2(D)$ is a homotopy equivalence, for all $D\in\dgD$.
It is easy to see that $\dgH$ factors through $\dgJ_3$. This means that there exists a dg functor $\dgH'\colon\dgD'\to\pob{\dgC'}$ such that $\dgH=\dgJ_3\comp\dgH'$. Thus, if we set $\dgG'_i=\dgJ_1\comp\dgF_i\comp\dgI'$, we get $\dgG'_1=\src{\dgC'}\comp\dgH'$ and $\dgG'_2=\tar{\dgC'}\comp\dgH'$, so that $\hq{\dgG'_1}=\hq{\dgG'_2}$.

Therefore $\hq{\dgJ_1}\comp\hq{\dgF_1}\comp\hq{\dgI'}=\hq{\dgG'_1}=
\hq{\dgG'_2}=\hq{\dgJ_1}\comp\hq{\dgF_2}\comp\hq{\dgI'}$
and, using that $\dgJ_1$ and $\dgI'$ are quasi-equivalences, we
conclude that $\hq{\dgF_1}=\hq{\dgF_2}$, whence $f_1=f_2$.
\end{proof}

\subsection{Morphisms in $\Hqe$ as dg modules}\label{subsec:genFM}

If $\kke,\kke'\in\dgh{\dgA\opp\lotimes\dgB}$ are quasi-isomorphic, then
it follows from Remark \ref{qiso} and Corollary \ref{rephtp} that
$\hq{\dfun{\kke}}=\hq{\dfun{\kke'}}\colon\hpdg{\dgA}\to\hproj{\dgB}$. In
particular, denoting by
$\ho{\kke}\in\Iso(H^0(\hproj{\dgA\opp\lotimes\dgB}))$ the homotopy
equivalence class of $\kke\in\hproj{\dgA\opp\lotimes\dgB}$ and composing with the natural bijection between $[\dgA,\dgB]$ and $[\hpdg{\dgA},\dgB]$ induced by the quasi-equivalence $\dgQ_\dgA\colon\hpdg{\dgA}\to\dgA$, we get a
well-defined map
\begin{equation*}\label{hdgFM}
\isomap[\dgA,\dgB]\colon\Iso(H^0(\hproj{\dgA\opp\lotimes\dgB}))\tto\hqe{\dgA,\hproj{\dgB}}\qquad\ho{\kke}\mapsto \hq{\dfun{\kke}}\comp\hq{\dgQ_\dgA}^{-1}
\end{equation*}

\begin{prop}\label{prop:exuniq}
For all dg categories $\dgA$ and $\dgB$, the map $\isomap[\dgA,\dgB]$ is bijective.
\end{prop}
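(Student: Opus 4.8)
The plan is to reduce to the case where $\dgA$ is h-projective, prove surjectivity by a ``roof plus induction'' argument, and prove injectivity by reducing to a homotopy equivalence of h-projective dg bimodules; this last reduction is where the real content lies. Since $\dgA\opp\lotimes\dgB=\hpdg{\dgA}\opp\otimes\dgB$ and $\isomap[\dgA,\dgB]$ is by construction $\isomap[\hpdg{\dgA},\dgB]$ followed by the bijection $\hqe{\hpdg{\dgA},\hproj{\dgB}}\iso\hqe{\dgA,\hproj{\dgB}}$ induced by $\dgQ_\dgA$, we may assume $\dgA$ h-projective; then $\dgQ_\dgA=\id_\dgA$ and $\isomap[\dgA,\dgB](\ho{\kke})=\hq{\dfun{\kke}}$ with $\dfun{\kke}\colon\dgA\to\hproj{\dgB}$ (Lemma \ref{hproj}). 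It is worth recording that $\isomap[\dgA,\dgB]$ then equals the composition of the map $\ho{\kke}\mapsto\hq{\ext{\dfun{\kke}}}$ (which lands in $\hqe{\hproj{\dgA},\hproj{\dgB}}_c$ by Lemma \ref{extfun} and part \eqref{extresi} of Proposition \ref{extres}) with the bijection of Proposition \ref{prop:Morita}, since $\hq{\ext{\dfun{\kke}}}\comp\hq{\Yon{\dgA}}=\hq{\dfun{\kke}}$ by part \eqref{extresi} of Proposition \ref{extres} and Corollary \ref{rephtp}; but I will argue directly with the formula $\ho{\kke}\mapsto\hq{\dfun{\kke}}$.

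\emph{Surjectivity.} Given $f\in\hqe{\dgA,\hproj{\dgB}}$, by part \eqref{Hqefrac} of Proposition \ref{Hqemor} write $f=\hq{\dgF}\comp\hq{\dgI}^{-1}$ with $\dgI\colon\dgC\to\dgA$ a quasi-equivalence and $\dgF\colon\dgC\to\hproj{\dgB}$ a dg functor; replacing $\dgC$ by $\hpdg{\dgC}$ we take $\dgC$ h-projective, and replacing $\dgF$ by $\dfun{\kkf}$ for an h-projective resolution $\kkf\to\kkf_0$ of the dg bimodule $\kkf_0$ corresponding to $\dgF$ leaves $\hq{\dgF}$, hence $f$, unchanged (Remark \ref{qiso}, Corollary \ref{rephtp}); so assume $\dgF=\dfun{\kkf}$ with $\kkf\in\hproj{\dgC\opp\otimes\dgB}$. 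Put $\dgI_1:=\dgI\opp\otimes\id_{\dgB}$, a quasi-equivalence by Remark \ref{qe}, and $\kke:=\Ind{\dgI_1}(\kkf)\in\hproj{\dgA\opp\otimes\dgB}$ (h-projective by part \eqref{extresh} of Proposition \ref{extres}). Since $\dgI_1$ is a quasi-equivalence, $\Ind{\dgI_1}$ and ``h-projective resolution of $\Res{\dgI_1}(\farg)$'' induce mutually inverse bijections on $\Iso(H^0(\hproj{\farg}))$, as in the proof of Proposition \ref{prop:rqr}(1) (cf.\ \cite[Sect.\ 14.12]{Dr}); hence $\ho{\kkf}$ is the class of an h-projective resolution of $\Res{\dgI_1}(\kke)$. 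As $\dfun{\Res{\dgI_1}(\farg)}=\dfun{\farg}\comp\dgI$ (as in the proof of Lemma \ref{hproj}), that resolution and $\kkf$ are homotopy equivalent objects of $\dgh{\dgC\opp\otimes\dgB}$ whose associated dg functors are both termwise homotopy equivalent to $\dfun{\kke}\comp\dgI$; so Remark \ref{qiso} and Corollary \ref{rephtp} give $\hq{\dgF}=\hq{\dfun{\kkf}}=\hq{\dfun{\kke}}\comp\hq{\dgI}$, i.e.\ $\hq{\dfun{\kke}}=f$.

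\emph{Injectivity.} Suppose $\hq{\dfun{\kke}}=\hq{\dfun{\kke'}}$ with $\kke,\kke'\in\hproj{\dgA\opp\otimes\dgB}$. By part \eqref{Hqehtp} of Proposition \ref{Hqemor} there are a quasi-equivalence $\dgI\colon\dgC\to\dgA$ (with $\dgC$ h-projective, after replacing it by $\hpdg{\dgC}$) and a dg functor $\dgH\colon\dgC\to\pob{\hproj{\dgB}}$ with $\src{\hproj{\dgB}}\comp\dgH=\dfun{\kke}\comp\dgI$ and $\tar{\hproj{\dgB}}\comp\dgH=\dfun{\kke'}\comp\dgI$. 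With $\dgI_1=\dgI\opp\otimes\id_{\dgB}$ as above, $\Res{\dgI_1}(\kke)$ and $\Res{\dgI_1}(\kke')$ lie in $\dgh{\dgC\opp\otimes\dgB}$ with associated dg functors $\dfun{\kke}\comp\dgI$ and $\dfun{\kke'}\comp\dgI$; choose h-projective resolutions $\ke\to\Res{\dgI_1}(\kke)$ and $\ke'\to\Res{\dgI_1}(\kke')$ with $\ke,\ke'\in\hproj{\dgC\opp\otimes\dgB}$. Exactly as in the surjectivity step, $\ho{\kke}=\Ind{\dgI_1}(\ho{\ke})$ and $\ho{\kke'}=\Ind{\dgI_1}(\ho{\ke'})$, and $\Ind{\dgI_1}$ is injective on $\Iso(H^0(\hproj{\farg}))$, so it suffices to prove $\ho{\ke}=\ho{\ke'}$. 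By Remark \ref{qiso} there are termwise homotopy equivalences $\dfun{\ke}\to\src{\hproj{\dgB}}\comp\dgH$ and $\dfun{\ke'}\to\tar{\hproj{\dgB}}\comp\dgH$, so the whole problem comes down to the claim: \emph{if $\ke,\ke'\in\hproj{\dgC\opp\otimes\dgB}$ are such that $\dfun{\ke}$ and $\dfun{\ke'}$ are termwise homotopy equivalent to $\src{\hproj{\dgB}}\comp\dgH$ and $\tar{\hproj{\dgB}}\comp\dgH$ for some dg functor $\dgH\colon\dgC\to\pob{\hproj{\dgB}}$, then $\ho{\ke}=\ho{\ke'}$.}

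\emph{The main obstacle} is precisely this claim, that is, converting the standard homotopy $\dgH$ into a homotopy equivalence of dg bimodules. A dg functor into $\pob{\hproj{\dgB}}$ supplies a termwise homotopy equivalence of its source and target dg functors \emph{together with} genuine homotopy-coherence data (its third component on morphisms), which must be strictified into a closed degree-zero morphism in $\dgm{\dgC\opp\otimes\dgB}$ that is a homotopy equivalence. My plan is to first dispose of the case $\dgA=\K$, where $\src{\hproj{\dgB}}\comp\dgH$ and $\tar{\hproj{\dgB}}\comp\dgH$ are constant and a homotopy equivalence between $\ke$ and $\ke'$ can be read off from the value of $\dgH$ at a single object of $\dgC$; and then to descend to general $\dgA$ using the h-projectivity of $\dgC$, the explicit description of $\pob{\farg}$, Remark \ref{qe}, and the fact --- a consequence of Lemma \ref{lem:extension} --- that $\Ind{\farg\opp\otimes\id_{\dgB}}$ descends to a functor on $\Hqe$ that sends the quasi-equivalence $\dgI$ to an isomorphism. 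The delicate point, which requires care, is that the naive reduction of the general case to a smaller source dg category merely reproduces the same claim over $\dgC$, so the descent must be organized around the concrete path-object replacement rather than an abstract one.
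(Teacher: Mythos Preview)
Your surjectivity argument is correct and in fact slightly more direct than the paper's: you push $\kkf$ forward along $\Ind{\dgI\opp\otimes\id_{\dgB}}$ and then check that the unit $\kkf\to\Res{\dgI_1}(\kke)$ is a quasi-isomorphism, whereas the paper goes the other way, building an explicit ``inverse kernel'' $\kkd'\in\hproj{\dgA\opp\otimes\dgA'}$ with $\hq{\dgFM{\kkd'}}=\hq{\Ind{\dgI}}^{-1}$ and then setting $\kke':=\kkd'\otimes_{\dgA'}\kkf$. Both routes are fine.

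The injectivity, however, is genuinely incomplete. You have correctly isolated the obstruction: you must convert a standard homotopy $\dgH\colon\dgC\to\pob{\hproj{\dgB}}$ into a homotopy equivalence in $H^0(\hproj{\dgC\opp\otimes\dgB})$, and you acknowledge that your proposed descent from the case $\dgA=\K$ is circular (``merely reproduces the same claim over $\dgC$''). Even the base case is not as easy as you suggest: reading off a homotopy equivalence ``from the value of $\dgH$ at a single object of $\dgC$'' does not produce a closed degree-zero morphism of $\dgC\opp\otimes\dgB$-dg modules, since such a morphism must be compatible with all morphisms of $\dgC$, not just identities.

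The paper avoids this strictification problem entirely by a doubling trick. From $\hq{\dfun{\kke}}=\hq{\dfun{\kke'}}$ one tensors with $\id_{\dgA\opp}$ (Remark \ref{qe}) and uses Lemma \ref{kerprod} to obtain
\[
\hq{\dfun{\dgid{\dgA\opp}\otimes\kke}}=\hq{\dfun{\dgid{\dgA\opp}\otimes\kke'}}
\colon\dgA\opp\otimes\dgA\to\hproj{\dgA\opp\otimes\dgB}.
\]
Lemma \ref{lem:extension} then upgrades this to an equality of the \emph{extensions}
$\hq{\dgFM{\dgid{\dgA\opp}\otimes\kke}}=\hq{\dgFM{\dgid{\dgA\opp}\otimes\kke'}}$
as morphisms $\hproj{\dgA\opp\otimes\dgA}\to\hproj{\dgA\opp\otimes\dgB}$ in $\Hqe$. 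Now one simply \emph{evaluates} both sides at a single object, namely an h-projective resolution $\kkd$ of the diagonal $\dgid{\dgA}$: a short computation with Lemma \ref{kerprod} and h-flatness gives $\dgFM{\dgid{\dgA\opp}\otimes\kke}(\kkd)\iso\kkd\otimes_{\dgA}\kke\htp\kke$ in $H^0(\hproj{\dgA\opp\otimes\dgB})$, and similarly for $\kke'$, whence $\ho{\kke}=\ho{\kke'}$. The point is that adding the extra $\dgA\opp$-factor turns the equality of morphisms in $\Hqe$ into something that can be probed at a specific bimodule, bypassing any need to strictify the path-object data.
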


\begin{proof}
First of all, we can assume, without loss of generality, that $\dgA$ is h-projective. Given $f\colon\dgA\to\hproj{\dgB}$ in $\Hqe$, by part \eqref{Hqefrac}
of Proposition \ref{Hqemor} there exist dg functors
$\dgI\colon\dgA'\to\dgA$ and $\dgF\colon\dgA'\to\hproj{\dgB}$ with
$\dgI$ a quasi-equivalence such that
$f=\hq{\dgF}\comp\hq{\dgI}^{-1}$. Notice that
$\ext{f}:=\hq{\ext{\dgF}}\comp\hq{\Ind{\dgI}}^{-1}\colon
\hproj{\dgA}\to\hproj{\dgB}$ is such that $\ext{f}\comp\hq{\Yon{\dgA}}=f$ (see the proof of Proposition \ref{prop:Morita}). We denote by
$\kki\in\dgh{{\dgA'}\opp\otimes\dgA}$ and
$\kkf\in\dgh{{\dgA'}\opp\otimes\dgB}$ the objects such that
$\dfun{\kki}=\Yon{\dgA}\comp\dgI$ and $\dfun{\kkf}=\dgF$. Since
$\Ind{\id_{\dgA\opp}\otimes\dgI}\colon\hproj{\dgA\opp\otimes\dgA'}\to
\hproj{\dgA\opp\otimes\dgA}$
is a quasi-equivalence (by Remark \ref{qe} and part \eqref{extresq} of Proposition
\ref{extres}), there exists
$\kkd'\in\hproj{\dgA\opp\otimes\dgA'}$ such that
$\kkd:=\Ind{\id_{\dgA\opp}\otimes\dgI}(\kkd')$ is an h-projective
resolution of $\dgid{\dgA}$. It is easy to see, using Lemma \ref{kerprod}, that
$\Ind{\id_{\dgA\opp}\otimes\dgI}\iso\dgFM{\dgid{\dgA\opp}\otimes\kki}$ and we obtain
\[\kkd\iso\dgFM{\dgid{\dgA\opp}\otimes\kki}(\kkd')\iso
\dgid{\dgA\opp}\otimes_{\dgA}\kkd'\otimes_{\dgA'}\kki\iso
\kkd'\otimes_{\dgA'}\kki,\]
whence $\dgFM{\kkd}\iso\dgFM{\kki}\comp\dgFM{\kkd'}$ by the
associativity of tensor product. Notice that the last isomorphism
above is proved in \cite[Sect.\ 14.6]{Dr}. Taking into account
that $\hq{\dgFM{\kkd}}=\hq{\id_{\hproj{\dgA}}}$, it follows that
$\hq{\dgFM{\kkd'}}=\hq{\dgFM{\kki}}^{-1}=\hq{\Ind{\dgI}}^{-1}$. Setting
$\kke':=\kkd'\otimes_{\dgA'}\kkf\in\dgh{\dgA\opp\otimes\dgB}$, we have
$\dgFM{\kke'}\iso\dgFM{\kkf}\comp\dgFM{\kkd'}$. Therefore
$\hq{\dgFM{\kke'}}=\hq{\dgFM{\kkf}}\comp\hq{\dgFM{\kkd'}}=
\hq{\ext{\dgF}}\comp\hq{\Ind{\dgI}}^{-1}=\ext{f}$.
Taking $\kke\in\hproj{\dgA\opp\otimes\dgB}$ an h-projective resolution
of $\kke'$, this proves that $\hq{\dfun{\kke}}=\hq{\dfun{\kke'}}=\hq{\dgFM{\kke'}}\comp\hq{\Yon{\dgA}}=f$, whence $\isomap[\dgA,\dgB]$ is surjective.

As for injectivity, let $\kke,\kke'\in\hproj{\dgA\opp\otimes\dgB}$ be
such that $\hq{\dfun{\kke}}=\hq{\dfun{\kke'}}$. Then
$\hq{\id_{\dgA\opp}\otimes\dfun{\kke}}=
\hq{\id_{\dgA\opp}\otimes\dfun{\kke'}}$ by Remark \ref{qe}. Hence also
$\hq{\dfun{\dgid{\dgA\opp}}\otimes\dfun{\kke}}=
\hq{\dfun{\dgid{\dgA\opp}}\otimes\dfun{\kke'}}\colon
\dgA\opp\otimes\dgA\to\hproj{\dgA\opp}\otimes\hproj{\dgB}$.
From Lemma \ref{kerprod} we deduce that
$\hq{\dfun{\dgid{\dgA\opp}\otimes\kke}}=
\hq{\dfun{\dgid{\dgA\opp}\otimes\kke'}}$, and so (by Lemma \ref{lem:extension})
\begin{equation}\label{dgFMeq}
\hq{\dgFM{\dgid{\dgA\opp}\otimes\kke}}=
\hq{\dgFM{\dgid{\dgA\opp}\otimes\kke'}}\colon
\hproj{\dgA\opp\otimes\dgA}\to\hproj{\dgA\opp\otimes\dgB}.
\end{equation}
Denoting as before by $\kkd$ an h-projective resolution of
$\dgid{\dgA}$, again by Lemma \ref{kerprod} we have
\[\dgFM{\dgid{\dgA\opp}\otimes\kke}(\kkd)\iso
\dgid{\dgA\opp}\otimes_{\dgA}\kkd\otimes_{\dgA}\kke\iso
\kkd\otimes_{\dgA}\kke.\]
As $\kkd\to\dgid{\dgA}$ is a quasi-isomorphism between dg modules
which are h-flat over $\dgA$, also the induced map
$\kkd\otimes_{\dgA}\kke\to\dgid{\dgA}\otimes_{\dgA}\kke\iso\kke$ is a
quasi-isomorphism, hence a homotopy equivalence, since both the source
and the target are in $\hproj{\dgA\opp\otimes\dgB}$. This proves that
$\ho{\dgFM{\dgid{\dgA\opp}\otimes\kke}(\kkd)}=\ho{\kke}$, and similarly
$\ho{\dgFM{\dgid{\dgA\opp}\otimes\kke'}(\kkd)}=\ho{\kke'}$. As
$\ho{\dgFM{\dgid{\dgA\opp}\otimes\kke}(\kkd)}=
\ho{\dgFM{\dgid{\dgA\opp}\otimes\kke'}(\kkd)}$ by \eqref{dgFMeq}, we
conclude that $\ho{\kke}=\ho{\kke'}$.
\end{proof}

\begin{cor}\label{cor:bij}
If $\dgA$ and $\dgB$ are dg categories, the map of Proposition \ref{prop:Morita} is bijective.
\end{cor}

\begin{proof}
It remains to prove that the map is injective. To this end, we can clearly assume that $\dgB$ is h-projective. Then it is very easy to see that, for every dg category $\dgC$, the map $\isomap[\dgC,\dgB]$ can be identified with the map
\[
\Iso(H^0(\hproj{\dgC\opp\otimes\dgB}))\tto\hqe{\dgC,\hproj{\dgB}}\qquad\ho{\kke}\mapsto \hq{\dfun{\kke}}.
\]
By Proposition \ref{prop:exuniq} this implies that every morphism $\dgC\to\hproj{\dgB}$ in $\Hqe$ can be represented by a dg functor. In particular, this is true when $\dgC=\hproj{\dgA}$.

Therefore, given $f_1,f_2\in\hqe{\hproj{\dgA},\hproj{\dgB}}_c$ such that $f_1\comp\hq{\Yon{\dgA}}=f_2\comp\hq{\Yon{\dgA}}$, there exist two dg functors $\dgF_1,\dgF_2\colon\hproj{\dgA}\to\hproj{\dgB}$ such that $f_i=\hq{\dgF_i}$, for $i=1,2$. Setting
\[
\dgG_i:=\dgF_i\comp\Yon{\dgA}\colon\dgA\to\hproj{\dgB},
\]
for $i=1,2$, we have $\hq{\dgG_1}=\hq{\dgG_2}$ by hypothesis and, by Lemma \ref{lem:extension}, also $\hq{\ext{\dgG_1}}=\hq{\ext{\dgG_2}}$. Thus, in order to conclude that $f_1=f_2$, it is enough to show that $\hq{\dgF_i}=\hq{\ext{\dgG_i}}$, for $i=1,2$.

Indeed, by part \eqref{extresa} of Proposition \ref{extres}, $\ext{\dgG_i}$ (regarded as a dg functor $\dgm{\dgA}\to\dgm{\dgB}$) has a right adjoint $\res{\dgG_i}$. Note that, for all
$M\in\hproj{\dgA}$, we have $\res{\dgG_i}\comp\dgF_i(M)=\hproj{\dgB}(\dgG_i(-),\dgF_i(M))$.
Thus composition with $\dgF_i$ yields a natural map
\[
M=\hproj{\dgA}(\Yon{\dgA}(-),M)\tto\hproj{\dgB}(\dgG_i(-),\dgF_i(M))=\res{\dgG_i}\comp\dgF_i(M),
\]
whence, by adjunction, a natural transformation $\theta\colon\ext{\dgG_i}\to\dgF_i$ with
the property that $H^0(\theta)\rest{\Yon{\dgA}(\dgA)}$ is an isomorphism. Since $H^0(\ext{\dgG_i})$ and $H^0(\dgF_i)$ are continuous (the first by part \eqref{extresi} of Proposition \ref{extres}, the second by assumption),
Remark \ref{rmk:extnat} yields that $\theta$ is a termwise homotopy
equivalence. Hence, by Corollary \ref{rephtp}, we have
$\hq{\dgF_i}=\hq{\ext{\dgG_i}}$, for $i=1,2$.
\end{proof}

\section{The new proof of Theorem \ref{thm:Toen}}\label{sec:Morita}

Let $\dgA$, $\dgB$ and $\dgC$ be dg categories. In view of the basic properties of the derived tensor product, of Remark \ref{qe} and of Proposition \ref{prop:rqr}, we can assume in the proof of Theorem \ref{thm:Toen}, without loss of generality, that these three dg-categories are h-projective. In this way the tensor product does not need to be derived. Putting the results in the previous section together, we get the following maps:
\begin{equation*}\label{eqn:intHoms1}
	\xymatrix{
	\hqe{\dgA\otimes\dgB,\dgC}\ar@{<->}[r]^-{\Psi}_-{1:1}&
	\hqe{\dgA\otimes\dgB,\essim{\dgC}}\ar@{^{(}->}[r]^-{\monomap}&
	\hqe{\dgA\otimes\dgB,\hproj{\dgC}}\ar@{<->}[r]^-{\isomap}_-{1:1}&
	\Iso(H^0(\hproj{(\dgA\otimes\dgB)\opp\otimes\dgC}))
	}
\end{equation*}
where $\Psi$ is induced by the quasi-equivalence $\dgC\hookrightarrow\essim{\dgC}$ while $\monomap:=\monomap[\dgA\otimes\dgB,\essim{\dgC}\hookrightarrow\hproj{\dgC}]$ and $\isomap:=\isomap[\dgA\otimes\dgB,\dgC]$ are the maps with the properties discussed in Propositions \ref{prop:Morita2} and \ref{prop:exuniq}. Obviously, by definition, $\im(\monomap)$ consists of
all $f\in\hqe{\dgA\otimes\dgB,\hproj{\dgC}}$ such that
$\im(H^0(f))\subseteq H^0(\essim{\dgC})$. Using $\isomap$, we get a bijection
between $\im(\monomap)$ and the set of isomorphism classes of the objects
$\kke\in H^0(\hproj{(\dgA\otimes\dgB)\opp\otimes\dgC})$ such that
$H^0(\dfun{\kke})\colon H^0(\dgA\otimes\dgB)\to H^0(\hproj{\dgC})$ factors through $H^0(\essim{\dgC})$.
Thus, by definition, we have a natural bijection between the sets
\begin{equation}\label{eqn:intHoms2}
\xymatrix{
\hqe{\dgA\otimes\dgB,\dgC}\ar@{<->}[rr]^-{1:1}&&\Iso(H^0(\rqr{(\dgA\opp\otimes\dgB\opp)\otimes\dgC})).
}
\end{equation}

On the other hand, we have the following sequence of natural maps of sets:
\begin{equation*}\label{eqn:intHoms3}
\xymatrix{
\hqe{\dgA,\rqr{\dgB\opp\otimes\dgC}}\ar@{^{(}->}[r]^-{\monomap}&
\hqe{\dgA,\hproj{\dgB\opp\otimes\dgC}}\ar@{<->}[r]^-{\isomap}_-{1:1}&
\Iso(H^0(\hproj{\dgA\opp\otimes(\dgB\opp\otimes\dgC)})),
}
\end{equation*}
where $\monomap:=\monomap[\dgA,\rqr{\dgB\opp\otimes\dgC}\hookrightarrow\hproj{\dgB\opp\otimes\dgC}]$ and $\isomap:=\isomap[\dgA,\dgB\opp\otimes\dgC]$ have the properties discussed in Propositions \ref{prop:Morita2} and \ref{prop:exuniq}. In analogy with the previous case,
$\im(\monomap)$ consists of all
$f\in\hqe{\dgA,\hproj{\dgB\opp\otimes\dgC}}$ such that
$\im(H^0(f))\subseteq H^0(\rqr{\dgB\opp\otimes\dgC})$ (here we use that $\rqr{\dgB\opp\otimes\dgC}$ is by definition closed under homotopy equivalences in $\hproj{\dgB\opp\otimes\dgC}$). The map $\isomap$ yields a natural bijection between
$\im(\monomap)$ and the set of isomorphism classes of objects $\kkf\in H^0(\hproj{\dgA\opp\otimes\dgB\opp\otimes\dgC})$ such that
$H^0(\dfun{\kkf})\colon H^0(\dgA)\to H^0(\hproj{\dgB\opp\otimes\dgC})$ factors through $H^0(\rqr{\dgB\opp\otimes\dgC})$. Again by definition, this provides a natural bijection of sets
\begin{equation}\label{eqn:intHoms4}
\xymatrix{
\hqe{\dgA,\rqr{\dgB\opp\otimes\dgC}}\ar@{<->}[rr]^-{1:1}&&\Iso(H^0(\rqr{(\dgA\opp\otimes\dgB\opp)\otimes\dgC})).
}
\end{equation}

If $\dgB$ is the (h-projective) dg category $\K$, then we observed
that $\rqr{\dgB\opp\lotimes\dgC}\iso\essim{\dgC}$. Thus, we get the
natural bijection between the sets $\hqe{\dgA,\dgC}$ and
$\Iso(H^0(\rqr{\dgA\opp\otimes\dgC}))$, which is \eqref{eqn:firstpart}.

As in the statement of Theorem \ref{thm:Toen}, set
$\IHom(\dgB,\dgC):=\rqr{\dgB\opp\lotimes\dgC}$, for two dg categories
$\dgB$ and $\dgC$. If $\dgB$ is an h-projective dg category, we have
$\IHom(\dgB,\dgC)=\rqr{\dgB\opp\otimes\dgC}$ because we do not need to derive the tensor product (see Remark \ref{rmk:hprojdg1}). Due to Proposition \ref{prop:rqr} and the naturality of the bijections
in \eqref{eqn:intHoms2} and \eqref{eqn:intHoms4}, we get a natural
bijection between the sets $\hqe{\dgA\otimes\dgB,\dgC}$ and
$\hqe{\dgA,\IHom(\dgB,\dgC)}$, which is \eqref{eqn:ultima}. So $\Hqe$
is a closed symmetric monoidal category, and this concludes the proof of Theorem \ref{thm:Toen}.

\begin{cor}\label{cor:adj}
Given three dg categories $\dgA$, $\dgB$ and $\dgC$, the dg categories $\IHom(\dgA\lotimes\dgB,\dgC)$ and $\IHom(\dgA,\IHom(\dgB,\dgC))$ are isomorphic in $\Hqe$.
\end{cor}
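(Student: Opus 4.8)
The plan is to deduce this from the fact, established in Theorem \ref{thm:Toen}, that $\Hqe$ is a closed symmetric monoidal category, via a standard Yoneda argument — exactly as one proves that in any closed monoidal category one has $\IHom(X\otimes Y,Z)\iso\IHom(X,\IHom(Y,Z))$. First I would fix an arbitrary dg category $\dgD$ and combine the bijection \eqref{eqn:ultima} with the associativity of $\lotimes$ on $\Hqe$ (part of the monoidal structure discussed in Section \ref{subsec:tensors}) to get a chain of bijections
\begin{align*}
\hqe{\dgD,\IHom(\dgA\lotimes\dgB,\dgC)}
&\iso\hqe{\dgD\lotimes(\dgA\lotimes\dgB),\dgC}
\iso\hqe{(\dgD\lotimes\dgA)\lotimes\dgB,\dgC}\\
&\iso\hqe{\dgD\lotimes\dgA,\IHom(\dgB,\dgC)}
\iso\hqe{\dgD,\IHom(\dgA,\IHom(\dgB,\dgC))},
\end{align*}
where the first bijection is \eqref{eqn:ultima} applied to the triple $(\dgD,\dgA\lotimes\dgB,\dgC)$, the second is the associativity isomorphism for $\lotimes$, the third is \eqref{eqn:ultima} applied to $(\dgD\lotimes\dgA,\dgB,\dgC)$, and the fourth is \eqref{eqn:ultima} applied to $(\dgD,\dgA,\IHom(\dgB,\dgC))$.

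Next I would check that this composite bijection is natural in $\dgD$. Each individual step is natural: the three instances of \eqref{eqn:ultima} are natural in their first argument, since \eqref{eqn:ultima} was produced in Section \ref{sec:Morita} from natural transformations, and the associativity isomorphism is natural because the tensor product $\lotimes\colon\Hqe\times\Hqe\to\Hqe$ is a functor. Hence the two functors $\hqe{\farg,\IHom(\dgA\lotimes\dgB,\dgC)}$ and $\hqe{\farg,\IHom(\dgA,\IHom(\dgB,\dgC))}$ from $\Hqe\opp$ to sets are isomorphic, and the Yoneda lemma in $\Hqe$ yields an isomorphism in $\Hqe$ between the two representing objects $\IHom(\dgA\lotimes\dgB,\dgC)$ and $\IHom(\dgA,\IHom(\dgB,\dgC))$.

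The argument is entirely formal once Theorem \ref{thm:Toen} is available, so I do not expect any genuine obstacle; the only point requiring (routine) care is the verification that the displayed chain is natural in $\dgD$, which reduces to naturality assertions already built into \eqref{eqn:ultima} and into the symmetric monoidal structure on $\Hqe$. No further input beyond Theorem \ref{thm:Toen} and Section \ref{subsec:tensors} is needed.
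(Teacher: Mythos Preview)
Your argument is correct and is essentially the same as the paper's own proof: fix an arbitrary $\dgD$, combine \eqref{eqn:ultima} with the associativity of $\lotimes$ to obtain natural bijections between $\hqe{\dgD,\IHom(\dgA\lotimes\dgB,\dgC)}$ and $\hqe{\dgD,\IHom(\dgA,\IHom(\dgB,\dgC))}$, and conclude by Yoneda. The paper merely presents the chain as a commutative square rather than a linear composite, but the content is identical.
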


\begin{proof}
For any dg category $\dgD$ and using \eqref{eqn:ultima} and the associativity of the derived tensor product, we get the following natural bijections:
\[
\xymatrix{
\hqe{\dgD\lotimes(\dgA\lotimes\dgB),\dgC}\ar@{<->}[d]_-{1:1}\ar@{<->}[r]^-{1:1}&\hqe{\dgD\lotimes\dgA,\IHom(\dgB,\dgC)}\ar@{<->}[d]^-{1:1}\\
\hqe{\dgD,\IHom(\dgA\lotimes\dgB,\dgC)}&\hqe{\dgD,\IHom(\dgA,\IHom(\dgB,\dgC))},
}
\]
for every dg category $\dgD$. By Yoneda's lemma, we conclude.
\end{proof}

For two dg categories $\dgA$ and $\dgB$, we denote by $\IHom_c(\hproj{\dgA},\hproj{\dgB})$
the full dg subcategory of $\IHom(\hproj{\dgA},\hproj{\dgB})$
consisting of all $\kkf\in\IHom(\hproj{\dgA},\hproj{\dgB})$ such that
$\ho{\kkf}\in\hqe{\hproj{\dgA},\hproj{\dgB}}_c$ under
\eqref{eqn:firstpart}. We can prove the following, which is Theorem 7.2
in \cite{To} and is usually referred to as derived Morita theory.

\begin{cor}\label{cor:exintHoms2}
Given two dg categories $\dgA$ and $\dgB$, $\IHom(\dgA,\hproj{\dgB})$
and $\hproj{\dgA\opp\lotimes\dgB}$ are isomorphic in $\Hqe$. Moreover, there exist quasi-equivalences
$\IHom_c(\hproj{\dgA},\hproj{\dgB})\to\IHom(\dgA,\hproj{\dgB})$ and $\IHom(\Pe{\dgA},\Pe{\dgB})\to\IHom(\dgA,\Pe{\dgB})$ induced by the Yoneda embedding
$\Yon{\dgA}\colon\dgA\to\Pe{\dgA}\subset\hproj{\dgA}$.
\end{cor}

\begin{proof}
As for the first part of the statement, observe that, in view of Proposition \ref{prop:exuniq}, we have natural bijections
\begin{equation}\label{eqn:b}
\xymatrix{
\hqe{\dgC,\hproj{\dgA\opp\lotimes\dgB}}\ar@{<->}[r]^-{1:1}&\Iso(H^0(\hproj{(\dgC\opp\lotimes\dgA\opp)\lotimes\dgB}))\ar@{<->}[r]^-{1:1}&\hqe{\dgC\lotimes\dgA,\hproj{\dgB}},
	}
\end{equation}
for every dg category $\dgC$. Using \eqref{eqn:ultima}, we get the result by Yoneda's lemma.

For the second part, we argue as at the beginning of the proof of
Theorem 7.2 of \cite{To}. So we have to show that, for
any dg category $\dgC$, the Yoneda embedding $\Yon{\dgA}$ induces a
bijection $\hqe{\hproj{\dgA}\lotimes\dgC,\hproj{\dgB}}'_c\tto
\hqe{\dgA\lotimes\dgC,\hproj{\dgB}}$, where
$\hqe{\hproj{\dgA}\lotimes\dgC,\hproj{\dgB}}'_c$ is the subset of
$\hqe{\hproj{\dgA}\lotimes\dgC,\hproj{\dgB}}$ containing all morphisms
$f$ such that $H^0(f)((\farg,C))$ is  continuous for all $C\in\dgC$.
Indeed, by \eqref{eqn:b}, we get the natural bijection
$\hqe{\dgA\lotimes\dgC,\hproj{\dgB}}\to\hqe{\dgA,\hproj{\dgC\opp\lotimes\dgB}}$.
Similarly, one deduces the natural bijection
$\hqe{\hproj{\dgA}\lotimes\dgC,\hproj{\dgB}}'_c\to
\hqe{\hproj{\dgA},\hproj{\dgC\opp\lotimes\dgB}}_c$. Now
we simply apply Corollary \ref{cor:bij}.

As for perfect dg modules, given a dg category $\dgC$, we consider the
dg functor
$\dgF:=\id_{\dgC\opp}\otimes\Yon{\dgA}\opp\otimes\id_{\dgB}\colon\dgD_1\to\dgD_2$,
where $\dgD_1:=\dgC\opp\lotimes\dgA\opp\lotimes\dgB$ and $\dgD_2:=\dgC\opp\lotimes\Pe{\dgA}\opp\lotimes\dgB$. By \cite[Prop.\ 1.15]{LO}, we have that $\Ind{\dgF}\colon\hproj{\dgD_1}\to\hproj{\dgD_2}$ is a quasi-equivalence. Thus, by Proposition \ref{prop:exuniq} and the same computations as in the proof of Theorem \ref{thm:Toen} above, we get a commutative diagram
\begin{equation*}\label{eqn:intHoms5}
\xymatrix{
\hqe{\dgC\lotimes\dgA,\Pe{\dgB}}\ar@{^{(}->}[r] & \hqe{\dgC\lotimes\dgA,\hproj{\dgB}}\ar@{<->}[r]^-{\isomap}_-{1:1}&
\Iso(H^0(\hproj{\dgD_1}))\ar@{<->}[d]^-{1:1}\\
\hqe{\dgC\lotimes\Pe{\dgA},\Pe{\dgB}}\ar[u]^-{\Yon{\dgC\lotimes\dgA}\comp\farg}\ar@{^{(}->}[r] & \hqe{\dgC\lotimes\Pe{\dgA},\hproj{\dgB}}\ar@{<->}[r]^-{\isomap}_-{1:1}&
\Iso(H^0(\hproj{\dgD_2})),
}
\end{equation*}
where the right vertical bijection is induced by the quasi-equivalence
$\Ind{\dgF}$. Thus the Yoneda embedding induces a natural bijection
between $\hqe{\dgC\lotimes\Pe{\dgA},\Pe{\dgB}}$ and
$\hqe{\dgC\lotimes\dgA,\Pe{\dgB}}$, for all dg categories $\dgC$. By
\eqref{eqn:ultima}, we conclude using Yoneda's lemma.
\end{proof}


\medskip

{\small\noindent{\bf Acknowledgements.}
Part of this paper was written while the second author was visiting the
Department of Mathematics of the Ohio State University and, later,
both authors were at the
Erwin Schr\"odinger International Institute for Mathematical Physics
(Wien) in the context of the thematic program `The Geometry of Topological D-Branes,
Categories, and Applications'. The warm hospitality of these institutions is gratefully acknowledged. We would also like to thank Betrand To\"{e}n for patiently answering our questions and Emanuele Macr\`{i}, Pawel Sosna and Adeel Yusufzai for comments on an early version of this paper. Finally, we would like to express our gratitude to the anonymous reader who pointed out a gap in the proof of Proposition \ref{prop:Morita} contained in the published version of this paper.
}


\end{document}